\numberwithin{equation}{section} 
\newcommand{\udef}{\mathrel{\mathop:}=}
\newcommand{\R}{\mathbb{R}}
\newcommand{\N}{\mathbb{N}}
\newcommand{\de}{\mathrm{d}}
\theoremstyle{plain}
\newtheorem{thm}{Theorem}[section]
\newtheorem{lem}[thm]{Lemma}
\newtheorem{rem}[thm]{Remark}
\newtheorem{exm}[thm]{Example}
    \title[On approximation of solutions of  SDDEs]{On approximation of solutions of stochastic delay differential equations via randomized Euler scheme}
\author[P. Przyby{\l}owicz]{Pawe{\l} Przyby{\l}owicz}
\address{AGH University of Krakow,
Faculty of Applied Mathematics,
Al. A.~Mickiewicza 30, 30-059 Krak\'ow, Poland}
\email{pprzybyl@agh.edu.pl, corresponding author}
\author[Y. Wu]{Yue Wu}
\address{Department of Mathematics and Statistics, University of Strathclyde, Glasgow, UK}
\email{yue.wu@strath.ac.uk}
\author[X. Xie]{Xinheng Xie}
\address{Department of Mathematics and Statistics, University of Strathclyde, Glasgow, UK}
\email{xinheng.xie@strath.ac.uk}
\begin{document}
\begin{abstract}
     We investigate existence, uniqueness and approximation of solutions to stochastic delay differential equations (SDDEs) under Carath\'eodory-type drift coefficients. Moreover, we also assume that both drift $f=f(t,x,z)$ and diffusion $g=g(t,x,z)$ coefficient are Lipschitz continuous with respect to the space variable $x$, but only H\"older continuous with respect to the delay variable $z$.  We provide a construction of randomized Euler scheme for approximation of solutions of Carath\'eodory SDDEs, and investigate its upper error bound.  Finally, we report results of numerical experiments that confirm our theoretical findings.
    \newline\newline
{\bf MSC} (2020): 68Q25, 65C30, 60H10
\end{abstract}
\keywords{stochastic differential equations, constant delay, randomized  Euler scheme, Wiener process, Carath\'eodory-type conditions}
\maketitle

\section{Introduction}
In this paper, we investigate the efficiency of randomized numerical scheme for simulating the stochastic delay differential equations (SDDEs) by considering the SDDEs of the following form
\begin{equation}
\label{eq:DiscDDE2}
\begin{cases}
\mathrm{d}X(t)=f(t, X(t),X(t-\tau))\,\mathrm{d}t+g(t,X(t),X(t-\tau))\,\mathrm{d}W(t)\\
X(t)=x_0,  t\in[-\tau,0],
\end{cases}
\end{equation}
with the constant time-lag $\tau\in(0,+\infty)$, fixed time horizon $n\in\mathbb{N}$,  $f:[0,(n+1)\tau]\times\R^d\times\R^d\mapsto\R^d$,  $g:[0,(n+1)\tau]\times\R^d\times\R^d\mapsto\R^{d\times m}$, and $x_0\in\R^d$. We assume that the drift coefficient $f=f(t,x,z)$ is Borel measurable with respect to $t$, and (at least) continuous with respect to $(x,z)$. Therefore, the Carath\'eodory type conditions for $f$ are considered. For  the diffusion coefficient $g(t,x,z)$ we assume (at least) continuity with respect to all variables $(t,x,z)$.

Inspired by known Monte Carlo methods, randomized algorithms for approximation of stochastic integrals and solutions of stochastic differential equations (SDEs) have been recently considered in \cite{SHTD, SH1, RKYW2019, PPPM2014, PMPP2017, Morkisz2020,PSS2022}, to name but a few. The idea is to wisely combine the probabilistic representation for integrals with classical numerical schemes. A notable observation is that compared
to classical algorithms, their randomized counterparts may handle irregular coefficients well,
which relates to the flexible smoothness requirement of Monte Carlo methods. Suitably chosen randomization might be very helpful in order to handle time-irregularities in the right-hand side functions both for ODEs (\cite{BGMP2021, bochacik2,difonzo2022existence, hein_milla1,JentzenNeuenkirch,RKYW2017}) and SDEs (\cite{Morkisz2020,PMPP2017,PPPM2014,RKYW2019,PSS2022,Pss22}). 
 For instance, the classical Milstein method for non-autonomous SDEs requires the drift term to be differentiable with respect to  both temporal and spatial variables in order to achieve an order of convergence one. Its randomised version \cite{RKYW2019} can achieve the same order of convergence with only an Lipschitz condition (resp. a H\"older continuity) on the drift wrt the spatial variable (resp. the temporal variable).  

Though SDEs can be regarded as a special case of stochastic delay differential equations (SDDEs), the extention of the {\it randomized} numerical approaches from SDEs to SDDEs is non-trivial. On the one hand, compared to SDEs, the existence, uniqueness and $L^p$-H\"older regularity of the strong solution to \eqref{eq:DiscDDE2} is currently not known in the literature under  conditions for $f$ and $g$ considered in this paper. On the other hand, the time-delays may induce instabilities in the basic SDDEs \cite{mao2000delay}, and the presence of time-delays may further influence the convergence speed \cite{cao2015numerical}.
Both aspects bring challenges. 

In this article, we resolve these challenges one by one. For the study of the exact solution to \eqref{eq:DiscDDE2}, instead of considering the SDDE over the entire interval all at once, we follow a different way, known for CDDEs form in \cite{difonzo2022existence}. The entire time interval will be divided into multiple subintervals of the length $\tau$ and the solution of the SDDE will be considered at each subinterval separately. This will allow the SDDE to be converted into a sequence of iterative SDEs with random coefficients and analyzed by induction, where the delay term is treated as a random resource that has been given (see \eqref{eq:dynamics}). The randomized Euler-Maruyama method is defined in the same manner, i.e., iteratively. We keep the same grid for all the subintervals of length $\tau$ so that the simulation obtained from the preceding subinterval will directly be the delay input for the current subinterval. To assist the error analysis, we introduce the auxiliary randomized Euler-Maruyama scheme.
In the case when $f$ or $g$ are H\"older contiuous with respect to $z$ we observe that the numerical error accumulates over the subintervals. In particular, this may suggest that this analysis is not valid for the infinite time horizon cases.

To summarise, the main contributions of the paper are as follows:
\begin{itemize}
    \item We show existence, uniqueness, and H\"older regularity of the strong solution to \eqref{eq:DiscDDE2} when both the drift $f(t,x,z)$ and diffusion $g(t,x,z)$ coefficients are Borel measurable with respect to $t$, satisfy a global Lipschitz condition  with respect to $x$ andf global linear growth condition wrt $(x,z)$, but are  continuous with respect to the delay variable $z$  (see Theorem \ref{sol_dde_prop_1}).
    \item We perform rigorous error analysis of the randomized Euler scheme applied to \eqref{eq:DiscDDE2} when both  drift coefficient $f(t,x,z)$ and diffusion coefficient $g(t,x,z)$ satisfy a global Lipschitz condition with respect to $x$ and a global H\"older condition with respect to $z$. Still, we assume that $f$ is only Borel measurable with respect to $t$, but for $g$ we assume now that it is also H\"older contiuous with respect to the time variable $t$.  (Theorem \ref{rate_of_conv_expl_Eul}).
    \item We present an implementation of the randomized Euler-Maruyama method in Python code and report results of numerical experiments that show stable error behaviour as stated in Theorem \ref{rate_of_conv_expl_Eul}.
\end{itemize}

The structure of the article is as follows. Basic notions, definitions together with assumptions and the construction of the randomized Euler-Maruyama scheme are given in Section 2. All Section 3 is devoted to the issue of existence and uniqueness of strong solutions of the  Carath\'eodory type SDDEs \eqref{eq:DiscDDE2}.  Section 4 contains proof of the main result of the paper (Theorem \ref{sol_dde_prop_1}) that states upper bounds on the error of the randomized Euler-Maruyama  scheme. In Section 5 we report results of numerical experiments with an exemplary Python implementation.

\section{Preliminaries}
Define $\mathbb{N}=\{1,2,\ldots\}$,  $\mathbb{N}_0=\mathbb{N}\cup \{0\}$. For an integer $k$, $[k]:=\{1,\ldots,k\}$ and $[k]_0:=\{0\}\cup [k]$. By $|\cdot|$ we mean the Euclidean norm in $\R^d$ or the Frobenius norm in $\mathbb{R}^{d\times m}$.
We consider a  complete probability space $(\Omega,\Sigma,\mathbb{P})$. For a random variable $X:\Omega\to\mathbb{R}^d$ we denote by $\|X\|_{L^p(\Omega)}=(\mathbb{E}|X|^p)^{1/p}$, where $p\in [2,+\infty)$. We denote by $(\Sigma_t)_{t\geq 0}$ a filtration, satisfying the usual conditions, such that $W=(W(t))_{t\geq 0}$ is $m$-dimensional Wiener process on $(\Omega,\Sigma,\mathbb{P})$ with respect to $(\Sigma_t)_{t\geq 0}$. Let $\Sigma_{\infty}=\sigma\Bigl(\bigcup_{t\geq 0}\Sigma_t\Bigr)$. 
 For two sub-$\sigma$-fields $\mathcal{A}$, $\mathcal{B}$ of $\Sigma$ we denote by $\mathcal{A}\vee \mathcal{B}=\sigma(\mathcal{A}\cup\mathcal{B})$.

Let us fix the {\it horizon parameter} $n\in\mathbb{N}$. On the drift coefficient $f:[0,(n+1)\tau]\times\R^d\times\R^d\mapsto\R^d$ we impose the following assumptions:
\begin{enumerate}[label=\textbf{(A\arabic*)},ref=(A\arabic*)]
    \item\label{ass:A1} $f(t,\cdot,\cdot)\in C(\R^d\times\R^d;\R^d)$ for all $t\in [0,(n+1)\tau]$,
    \item\label{ass:A2} $f(\cdot,x,z):[0,(n+1)\tau]\to\mathbb{R}^d$ is Borel measurable for all $(x,z)\in \R^d\times\R^d$,
    \item\label{ass:A4}
        There exist $K_f\in (0,\infty)$ such that  for all $t\in [0,(n+1)\tau]$, $x,x_1,x_2,z\in\R^d$
    \begin{align}\label{assumpt_a4}
        \begin{split}
        &|f(t,x,z)|\leq K_f (1+|x|+|z|),\\
          & |f(t,x_1,z)-f(t,x_2,z)|\leq K_f|x_1-x_2|.
        \end{split}
    \end{align}
\end{enumerate}
For the diffusion coefficient $g:[0,(n+1)\tau]\times\R^d\times\R^d\mapsto\R^{d\times m}$ we impose the following assumptions:
\begin{enumerate}[label=\textbf{(B\arabic*)},ref=(B\arabic*)]
    \item\label{ass:B1} $g(t,\cdot,\cdot)\in C(\R^d\times\R^d;\R^{d\times m})$ for all $t\in [0,(n+1)\tau]$,
    \item\label{ass:B2} $g(\cdot,x,z):[0,(n+1)\tau]\to\mathbb{R}^{d\times m}$  is Borel measurable for all $(x,z)\in \R^d\times\R^d$,
    \item\label{ass:B3}
       There exists $K_g\in (0,\infty)$ such that  for all $t\in [0,(n+1)\tau]$, $x,x_1,x_2,z\in\R^d$
    \begin{align}\label{assumpt_b3}
        \begin{split}
        &|g(t,x,z)|\leq K_g (1+|x|+|z|),\\
          & |g(t,x_1,z)-g(t,x_2,z)|\leq K_g|x_1-x_2|.
        \end{split}
    \end{align}
\end{enumerate}
In Section 3 we show that,  under the assumptions \ref{ass:A1}-\ref{ass:A4}, \ref{ass:B1}-\ref{ass:B3}, the SDDE \eqref{eq:DiscDDE2} has an unique strong solution. Next, in Section 4 we investigate error of the {\it randomized Euler scheme} under slightly stronger assumptions than  \ref{ass:A1}-\ref{ass:A4}, \ref{ass:B1}-\ref{ass:B3}.
\begin{table}
\begin{center}
\begin{tabular}{ c|c} 
 \hline
 Symbol & Meaning  \\ 
   \hline
     $\gamma_k^j$ & uniformly distributed sample from $[0,1]$ for $j\in \mathbb{N}_0$ and $k\in \mathbb{N}$\\
 $t_k^j$& $t_k^j=j\tau+kh$ for $k\in [N]_0$ and $j\in [n]_0$ \\
 $\theta_{k+1}^j$ & $\theta_{k+1}^j=t_k^j+h\gamma_{k+1}^j$\\
$\delta_{k+1}^{j}$ & $\delta_{k+1}^{j}=kh+h\gamma_{k+1}^{j}$\\
   \hline
\end{tabular}
\caption{Notations.}
\label{table:1}
\end{center}
\end{table}
The aforementioned randomized Euler scheme is defined as follows. Fix the {\it discretization parameter} $N\in\N$ and set
\begin{displaymath}
	t_k^j=j\tau+kh, \quad k\in [N]_0, \ j\in [n]_0,
\end{displaymath} 
 where
\begin{equation}
	h=\frac{\tau}{N}.
\end{equation}
Note that for each $j$ the sequence $\{t^j_k\}_{k=0}^N$ provides uniform discretization of the subinterval $[j\tau,(j+1)\tau]$.
Let $\{\gamma_k^j\}_{j\in\mathbb{N}_0,k\in\mathbb{N}}$ be an iid sequence of random variables, defined on the complete probability space $(\Omega,\Sigma,\mathbb{P})$, where every $\gamma_k^j$ is uniformly distributed on $[0,1]$. We assume that the $\sigma$-fields $\sigma(\{\gamma_k^j\}_{j\in\mathbb{N}_0,k\in\mathbb{N}})$ and $\Sigma_{\infty}$ are independent. Then $W=(W(t))_{t\geq 0}$ is also the Wiener process with respect to the extended filtration 
\begin{equation}
    \tilde\Sigma_t=\Sigma_t\vee\sigma(\{\gamma_k^j\}_{j\in\mathbb{N}_0,k\in\mathbb{N}}), \quad t\geq 0.
\end{equation}
We set $y_0^{-1}=\ldots=y^{-1}_N=x_0$ and then  for $j=[n]_0$, $k=[N-1]_0$ we take
\begin{eqnarray}
\label{expl_euler_1}
	&&y_0^j=y^{j-1}_N,\\
\label{expl_euler_11}	
	&&y_{k+1}^j= y_k^j+h\cdot f(\theta_{k+1}^j,y_k^j,y_k^{j-1})+g(t_k^j,y_k^j,y_k^{j-1})(W(t_{k+1}^j)-W(t_{k}^j)),
\end{eqnarray}
where $\theta_{k+1}^j=t_k^j+h\gamma_{k+1}^j$. As the output we obtain the  sequence of $\mathbb{R}^d$-valued random vectors  $\{y_k^j\}_{k\in [N]_0,j\in [n]_0}$ that provides a discrete approximation of the values $\{X(t_k^{j})\}_{k\in [N]_0, j\in [n]_0}$. By induction we get that  for all $j\in [n]_0$, $k\in [N]_0$
\begin{eqnarray}
\label{sig_filed2}
&&\sigma(\{y_k^j\})\subset\sigma\Bigl(\{\gamma_1^0\ldots,\gamma_N^0,\ldots,\gamma_1^{j-1},\ldots,\gamma_N^{j-1},\gamma_1^j,\ldots,\gamma_k^j\}\Bigr)\notag\\
&&\quad\quad\vee \sigma\Bigl(\{W(t_0^0),\ldots, W(t_N^0),\ldots,W(t_0^j),\ldots, W(t_k^j)\}\Bigr).
\end{eqnarray}
As the horizon parameter $n$ is fixed, the randomized Euler scheme uses $O(N)$ evaluations of $f$ (with a constant
in the '$O$' notation that depends on $n$ but not on $N$).

The aim is to establish upper bounds on the $L^p(\Omega)$-error
\begin{equation}\label{eq:L2err}
    \Bigl\|\max\limits_{0\leq k\leq N}|X(t_k^j)-y_k^j|\Bigl\|_{L^p(\Omega)}
\end{equation}
for all $j=0,1,\ldots,n$.
\section{Properties of solutions to Carath\'eodory SDDEs}

 We take into account the semi-flow property holds for SDDE \eqref{eq:DiscDDE2}.  Namely, we  define 
 \begin{equation}
 \label{def_phi_l}
    \phi_{l}(t):=X(t+l\tau), \quad t\in [0,\tau],\quad      l=-1,0,1,\ldots,n
 \end{equation}
and, in particular,  $\phi_{-1}(t)=x_0$ for $t\in [0,\tau]$. Using change of variable formula for Lebesgue and It\^o integrals we arrive at the following SDE with random coefficients
\begin{equation}
\label{eq:dynamics}
\begin{cases}
\mathrm{d}\phi_{l}(t)=f_{l}\big(t, \phi_{l}(t)\big)\,\mathrm{d}t+g_{l}\big(t,\phi_{l}(t)\big)\,\mathrm{d}W_{l}(t), \quad t\in[0,\tau], \\
\phi_{l}(0)=\phi_{l-1}(\tau),
\end{cases}
\end{equation}
where the random fields $f_{l}:\mathbb{R}\times \mathbb{R}^d \times \Omega \to \mathbb{R}^d$ and $g_{l}:\mathbb{R}\times \mathbb{R}^d \times \Omega \to \mathbb{R}^{d\times m}$ are defined follows 
\begin{eqnarray}
\label{def_fgl}
    &&f_{l}(t, x):=f\big(t+l\tau, x,\phi_{l-1}(t)\big),\\
    &&g_{l}(t, x):=g\big(t+l\tau, x,\phi_{l-1}(t)\big),
\end{eqnarray}
 and $W_{l}(t):=W(t+l\tau)$, $t\in [0,\tau]$, $x\in\mathbb{R}^d$ \footnote{We can also take $W_{l}(t):=W(t+l\tau)-W(l\tau)$ - in both cases $W_l$ is a Wiener process with respect to the filtration $(\Sigma_{t}^l)_{t\in [0,\tau]}$= $(\Sigma_{t+l\tau})_{t\in [0,\tau]}$, see, for example, Theorem 100, page 67 in \cite{Situ}.}. We also set 
 \begin{eqnarray}
  &&\Sigma^{-1}_t:=\{\emptyset,\Omega\},\\
  &&\Sigma^j_t:=\Sigma_{t+j\tau},
 \end{eqnarray}
 for $t\in [0,\tau]$, $j\in [n]_0$. The solution $X$ of \eqref{eq:DiscDDE2} can be written as
\begin{equation}
    X(t)=\sum\limits_{j=-1}^n\phi_j(t-j\tau)\cdot\mathbf{1}_{[j\tau,(j+1)\tau]}(t), \quad t\in [-\tau,(n+1)\tau].
\end{equation}
We state $L^p$-boundedness and regularity of the solution $X$ in the following Theorem \ref{sol_dde_prop_1}. Similar result is given in Theorem 3.1. at pages 156-157 in \cite{XMao}, however, the $L^p$ estimates and $L^p$-H\"older regularity of the solution is not studied there.
\begin{thm}
\label{sol_dde_prop_1}
    Let $n\in\N_0$, $\tau\in (0,+\infty)$, $x_0\in\R^d$ and let $f,g$ satisfy the assumptions \ref{ass:A1}-\ref{ass:A4}, \ref{ass:B1}-\ref{ass:B3}. Then there exists a unique strong solution $X=X(x_0,f,g)$ to \eqref{eq:DiscDDE2}
    such that for $j\in [n]_0$ we have
    \begin{equation}
    \label{upper_est_Kj}
        \mathbb{E}\Bigl(\sup\limits_{0\leq t \leq \tau}|\phi_j(t)|^p\Bigr)\leq K_j,
    \end{equation}
    where  $K_{-1}:=|x_0|^p$, $K=\max\{K_f,K_g\}$,
    \begin{equation}
    \label{def_K_j}
        K_j= C_p\Bigl(K_{j-1}+c_p\tau^{p/2}K^p(\tau^{p/2}+1)(1+K_{j-1})\Bigr)\exp\Bigl(C_p\tau^pK^p(1+K^p)\Bigr),
    \end{equation}
    and for all $j\in [n]_0$, $t,s\in [0,\tau]$ it holds
    \begin{equation}
    \label{phi_j_Lipschitz}
        \|\phi_j(t)-\phi_j(s)\|_{L^p(\Omega)}\leq c_pK(\tau^{1/2}+1)(1+K_{j-1}^{1/p}+K_{j}^{1/p})|t-s|^{1/2}.
    \end{equation}
\end{thm}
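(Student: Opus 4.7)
The plan is to argue by induction on $j \in \{-1,0,1,\ldots,n\}$, building the solution $\phi_j$ subinterval by subinterval via the decoupled SDE system \eqref{eq:dynamics}. The base case $j=-1$ is immediate: $\phi_{-1}(t)\equiv x_0$, so $\mathbb{E}\sup_{0\leq t\leq \tau}|\phi_{-1}(t)|^p=|x_0|^p=K_{-1}$, and there is nothing to check for Hölder regularity. For the induction step, I assume $\phi_{j-1}$ has already been constructed on $[0,\tau]$ as a continuous, $(\Sigma^{j-1}_t)$-adapted process satisfying \eqref{upper_est_Kj}. Under \ref{ass:A1}--\ref{ass:A4} and \ref{ass:B1}--\ref{ass:B3}, the random fields $f_j(t,x)=f(t+j\tau,x,\phi_{j-1}(t))$ and $g_j(t,x)=g(t+j\tau,x,\phi_{j-1}(t))$ are then jointly measurable, Lipschitz in $x$ with constants $K_f$ and $K_g$, and satisfy a random linear growth bound with growth factor $K(1+|x|+|\phi_{j-1}(t)|)$. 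Since $\phi_{j-1}(\tau)\in L^p(\Omega,\Sigma^{j-1}_\tau,\mathbb{P})\subset L^p(\Omega,\Sigma^j_0,\mathbb{P})$ and $W_j$ is a Wiener process with respect to $(\Sigma^j_t)$, I invoke the classical existence/uniqueness theory for SDEs with random Lipschitz coefficients (Picard iteration, or e.g.\ Theorem 3.1 in \cite{XMao} adapted to random fields) to obtain a unique $(\Sigma^j_t)$-adapted strong solution $\phi_j$ of \eqref{eq:dynamics}, with continuous paths.

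I would then derive the recursive bound \eqref{upper_est_Kj} by writing $\phi_j(t)=\phi_{j-1}(\tau)+\int_0^t f_j(s,\phi_j(s))\,ds+\int_0^t g_j(s,\phi_j(s))\,dW_j(s)$, taking the $L^p$-norm of the supremum over $[0,T]$ for $T\in[0,\tau]$, and estimating the three terms separately. Hölder's inequality handles the drift integral and the Burkholder--Davis--Gundy inequality handles the stochastic integral; the linear growth assumptions produce $\int_0^T(1+\mathbb{E}\sup_{0\leq r\leq s}|\phi_j(r)|^p+\mathbb{E}\sup_{0\leq r\leq\tau}|\phi_{j-1}(r)|^p)\,ds$, and the induction hypothesis controls the last summand by $K_{j-1}$. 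A Gronwall argument closes the estimate and yields precisely the recursion \eqref{def_K_j}. The Hölder regularity \eqref{phi_j_Lipschitz} follows by applying the same Hölder/BDG tools to the increment $\phi_j(t)-\phi_j(s)=\int_s^t f_j(u,\phi_j(u))\,du+\int_s^t g_j(u,\phi_j(u))\,dW_j(u)$, now using the $L^p$-bound on $\phi_j$ just established and on $\phi_{j-1}$ from the induction hypothesis; the drift contributes an $|t-s|$ factor and the diffusion the dominant $|t-s|^{1/2}$, and after combining them the constant takes the form stated in \eqref{phi_j_Lipschitz}.

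Uniqueness of the full solution $X$ follows inductively from pathwise uniqueness of each $\phi_j$: given two solutions of \eqref{eq:DiscDDE2} sharing the same history, the delay inputs $\phi_{j-1}$ coincide by induction, reducing the difference $\phi_j^{(1)}-\phi_j^{(2)}$ to an SDE with Lipschitz coefficients and zero initial condition, so a standard Gronwall estimate forces the difference to vanish. The main technical delicacy, beyond bookkeeping, is verifying that $f_j,g_j$ are genuinely adapted and jointly measurable in the correct filtration so that the SDE \eqref{eq:dynamics} fits the classical framework; this uses continuity of the paths of $\phi_{j-1}$ together with the identifications $\Sigma^j_t=\Sigma_{t+j\tau}$ and $\Sigma^{j-1}_\tau\subset\Sigma^j_0$, which allow the previous subinterval's terminal value to serve as a valid initial condition at time $0$ of the next subinterval. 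The remaining computations are routine applications of Hölder, BDG, and Gronwall, and the constants $c_p,C_p$ appearing in \eqref{def_K_j}--\eqref{phi_j_Lipschitz} are absorbed at the end.
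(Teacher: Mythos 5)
Your proposal is correct and follows essentially the same route as the paper: induction over the subintervals $[j\tau,(j+1)\tau]$, recasting the SDDE as a sequence of SDEs with random Lipschitz coefficients $f_j,g_j$, verifying adaptedness and the (random) linear growth and Lipschitz bounds, and then obtaining the moment bound and $|t-s|^{1/2}$-regularity via H\"older, Burkholder--Davis--Gundy and Gronwall. The only cosmetic difference is that the paper outsources the existence/uniqueness and the $L^p$-supremum estimate to a cited result on SDEs with random coefficients (Proposition 3.28 and Remark 3.29 in \cite{PARRAS}) rather than running the Picard/Gronwall argument by hand, and starts the induction at $j=0$ rather than $j=-1$.
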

\begin{proof} We proceed by induction.    We start with the case when $j=0$ and consider the following SDE
\begin{equation}
\label{eqODE_0}
    \begin{cases}
        \mathrm{d}\phi_0(t)=f_0(t,\phi_0(t))\,\mathrm{d}t+g_0(t,\phi_0(t))\,\mathrm{d}W_0(t), & t\in [0,\tau], \\
        \phi_0(0)=x_{0}, 
    \end{cases}
\end{equation}
with $f_0(t,x)=f(t,x,\phi_{-1}(t))=f(t,x,x_0)$, $g_0(t,x)=f(t,x,\phi_{-1}(t))=g(t,x,x_0)$. Moreover, by \eqref{assumpt_a4}, \eqref{assumpt_b3} we have for all $t\in [0,\tau]$, $x,y\in\mathbb{R}^d$ that
\begin{equation}
\max\{|f_0(t,x)|,|g_0(t,x)|\}\leq \max\{K_f,K_g\}(1+|x_0|+|x|),
\end{equation} 
and
\begin{equation}
    \max\{|f_0(t,x)-f_0(t,y)|,|g_0(t,x)-g_0(t,y)|\}\leq \max\{K_f,K_g\} |x-y|.
\end{equation}
Therefore, by Proposition 3.28, page 187 in \cite{PARRAS} we have that there exists a unique strong solution $\phi_0:[0,\tau]\times\Omega\to\R^d$ of \eqref{eqODE_0} that is adapted to $(\Sigma^0_t)_{t\in [0,\tau]}$. By Remark 3.29, page 188 in \cite{PARRAS}, the solution $\phi_0$ satisfies \eqref{upper_est_Kj} with $j=0$ and
\begin{equation}
    K_0=C_p\Bigl[|x_0|^p+c_p \tau^{p/2} K^p (1+\tau^{p/2}) (1+|x_0|^p)\Bigr]\exp\Bigl(C_p\tau^pK^p(1+K^p)\Bigr),
\end{equation}
with $K=\max\{K_f,K_g\}$. In addition, 
$\phi_0$ satisfies \eqref{phi_j_Lipschitz} for $j=0$. 

Let us now assume that for some $j\in [n-1]_0$ there exists, adapted to $(\Sigma^j_t)_{t\in [0,\tau]}$,  a unique strong solution $\phi_j:[0,\tau]\times\Omega\to\R^d$ of
\begin{equation}
\label{eqODE_j}
\begin{cases}
\mathrm{d}\phi_j(t)=f_j(t,\phi_j(t))\,\mathrm{d}t+g_j(t,\phi_j(t))\,\mathrm{d}W_j(t), & t\in [0,\tau], \\
\phi_j(0)=\phi_{j-1}(\tau), 
\end{cases}
\end{equation}
that satisfies \eqref{upper_est_Kj}, \eqref{phi_j_Lipschitz}, where $f_j(t,x)=f(t+j\tau,x,\phi_{j-1}(t))$, $g_j(t,x)=g(t+j\tau,x,\phi_{j-1}(t))$. We consider the following SDE
\begin{equation}
\label{eqODE_j_1}
\begin{cases}
\mathrm{d}\phi_{j+1}(t)=f_{j+1}(t,\phi_{j+1}(t))\,\mathrm{d}t+g_{j+1}(t,\phi_{j+1}(t))\,\mathrm{d}W_{j+1}(t), & t\in [0,\tau], \\
\phi_{j+1}(0)=\phi_j({\tau}), 
\end{cases}
\end{equation}
with $f_{j+1}(t,x)=f(t+(j+1)\tau,x,\phi_j(t))$, $g_{j+1}(t,x)=g(t+(j+1)\tau,x,\phi_j(t))$. Since the process $\phi_j$ is adapted to $(\Sigma^j_t)_{t\in [0,\tau]}$, $\Sigma^j_t\subset\Sigma^{j+1}_t$ and has continuous trajectories,  for all $x\in\R^d$ the processes $(f_{j+1}(t,x))_{t\in [0,\tau]}$, $(g_{j+1}(t,x))_{t\in [0,\tau]}$ are $(\Sigma^{j+1}_t)_{t\in [0,\tau]}$-progressively measurable. Moreover, by \eqref{assumpt_a4}, \eqref{assumpt_b3} for all $(t,x)\in [0,\tau]\times\R^d$
\begin{equation}
    \max\{|f_{j+1}(t,x)|,|g_{j+1}(t,x)|\}\leq K (1+|\phi_j(t)|)+K|x|,
\end{equation}
and  for all $t\in [0,\tau]$, $x,y\in \mathbb{R}^d$ it holds
\begin{equation}
    \max\{|f_{j+1}(t,x)-f_{j+1}(t,y)|, |g_{j+1}(t,x)-g_{j+1}(t,y)|\}\leq K|x-y|.
\end{equation}
Hence, by  by Proposition 3.28, page 187 in \cite{PARRAS} there exists a unique strong solution solution $\phi_{j+1}:[0,\tau]\times\Omega\to\R^d$ of \eqref{eqODE_j_1}. By the inductive assumption and Remark 3.29, page 188 in \cite{PARRAS} we get that
\begin{eqnarray}
&&\mathbb{E}\Bigl(\sup\limits_{0\leq t\leq \tau}|\phi_{j+1}(t)|^p\Bigr)\\&&\leq C_p\Biggl(\mathbb{E}|\phi_j(\tau)|^p+\mathbb{E}\Bigl(\int\limits_0^{\tau}K(1+|\phi_j(s)|)\,\mathrm{d}s\Bigr)^p+\mathbb{E}\Bigl(\int\limits_0^{\tau}K^2(1+|\phi_j(s)|)^{p/2}\,\mathrm{d}s\Bigr)^p\Biggr)\notag\\
&&\qquad \times\exp\Bigl(C_p\tau^{p-1}\int\limits_0^{\tau}(K^p+K^{2p})\,\mathrm{d}s\Bigr)\notag\\
&&\leq C_p\Bigl(K_j+c_p\tau^{p/2}K^p(\tau^{p/2}+1)(1+K_j)\Bigr)\exp\Bigl(C_p\tau^pK^p(1+K^p)\Bigr)=K_{j+1},\notag
\end{eqnarray}
and therefore, by the H\"older and Burkholder inequalities, we get for $s,t\in [0,\tau]$, $s<t$
\begin{eqnarray}
    &&\mathbb{E}|\phi_{j+1}(t)-\phi_{j+1}(s)|^p\leq C_p\Bigl[(t-s)^{p-1}\int\limits_s^t\mathbb{E}|f_{j+1}(u,\phi_{j+1}(u))|^p\,\mathrm{d}u\notag\\
    &&+\hat C_p(t-s)^{\frac{p-2}{2}}\int\limits_s^t\mathbb{E}|g_{j+1}(u,\phi_{j+1}(u))|^p\,\mathrm{d}u\Bigr]\leq \bar K_{j+1}(t-s)^{p/2},
\end{eqnarray}
where $\bar K_{j+1}=c_p^p K^p(\tau^{p/2}+1)(1+K_j+K_{j+1})$. This ends the inductive proof.
\end{proof}
\section{Error analysis for the randomized Euler algorithm}\label{sec:alpha}
In this section we provide error analysis for randomized Euler scheme under slightly stronger assumptions than those stated in Section 2. Namely,  for the drift coefficient we assume that instead of \ref{ass:A4} it  satisfies what follows:
\begin{itemize}
    \item [(A3')]
        There exist $\alpha_1\in (0,1]$, $\bar K_f, L_f\in (0,\infty)$ such that  for all $t\in [0,(n+1)\tau]$
        \begin{equation}
            |f(t,0,0)|\leq \bar K_f,
        \end{equation}
        and for all $t\in [0,(n+1)\tau]$, $x_1,x_2,z_1,z_2\in \mathbb{R}^d$
    \begin{equation}
    \label{assumpt_a31}
         |f(t,x_1,z_1)-f(t,x_2,z_2)|\leq L_f (|x_1-x_2|+|z_1-z_2|^{\alpha_1}).
    \end{equation}
\end{itemize}    
For the diffusion coefficient we impose the following assumption, that is stronger than \ref{ass:B1}, \ref{ass:B2}, and \ref{ass:B3}:
\begin{itemize}
    \item [(B3')]
        There exist $\alpha_2,\varrho\in (0,1]$, $\bar K_g, L_g\in (0,\infty)$ such that  for all $t\in [0,(n+1)\tau]$, $x,z\in\mathbb{R}^d$
        \begin{equation}
        \label{assumpt_b31}
            |g(t_1,x,z)-g(t_2,x,z)|\leq \bar K_g(1+|x|+|z|)|t_1-t_2|^{\varrho},
        \end{equation}
        and for all $t\in [0,(n+1)\tau]$, $x_1,x_2,z_1,z_2\in \mathbb{R}^d$
    \begin{equation}
    \label{assumpt_b32}
         |g(t,x_1,z_1)-g(t,x_2,z_2)|\leq L_g (|x_1-x_2|+|z_1-z_2|^{\alpha_2}).
    \end{equation}
\end{itemize}   
Note that for any $f$ that satisfies the assumption (A3') it holds for all $t\in [0,(n+1)\tau]$, $x,z\in\mathbb{R}^d$
\begin{equation}
    |f(t,x,z)|\leq K_f(1+|x|+|z|),
\end{equation}
with $K_f=\bar K_f+L_f$, while for any $g$ satisfying (B3') we have for all $t\in [0,(n+1)\tau]$, $x,z\in\mathbb{R}^d$
\begin{equation}
    |g(t,x,z)|\leq K_g(1+|x|+|z|),
\end{equation}
with $K_g=|g(0,0,0)|+L_g+\bar K_g((n+1)\tau)^{\varrho}$.

In order to perform error analysis, we define the auxiliary randomized Euler-Maruyama scheme as follows:
    \begin{eqnarray}
        \label{aux_euler_1}
	        &&\bar y_0^0= y^0_0=y^{-1}_N=x_0,\\
        \label{aux_euler_11}	
	        &&\bar y_{k+1}^0=\bar y_k^0+h\cdot f_0(\theta_{k+1}^0,\bar y_k^0)+g_0(t_k^0, \bar y_k^0)\Delta_k^0 W, \ k=0,1,\ldots, N-1,
    \end{eqnarray}
    and for $l=0,1,\ldots,n-1$
    \begin{eqnarray}
        \label{aux_euler_l1}
	       &&\bar y_0^{l+1}= y^{l+1}_0=y^{l}_N,\\
        \label{aux_euler_l11}	
	        &&\bar y_{k+1}^{l+1}=\bar y_k^{l+1}+h\cdot f_{l+1}(\delta_{k+1}^{l+1},\bar y_k^{l+1})+g_{l+1}(t_k^{0}, \bar y_k^{l+1})\Delta_k^{l+1} W, \ k=0,1,\ldots, N-1.
\end{eqnarray}
where  $\delta_{k+1}^{l+1}=kh+\gamma_{k+1}^{l+1}h$ and $\Delta_k^{l+1} W:=W(t_{k+1}^{l+1})-W(t_{k}^{l+1})$ for $l=-1, 0,\ldots,n-1$. 

Note that  $\delta_{k+1}^{l+1}$ is uniformly distributed in $(t_k^0,t_{k+1}^0)$ and $\theta_{k+1}^{l+1}=\delta_{k+1}^{l+1}+(l+1)\tau$ is uniformly distributed in $(t_k^{l+1},t_{k+1}^{l+1})$. For $N\geq 2$ we have that $h\in (0,\tau)$, which gives that $t_k^{l+1}>t_{k+1}^l$ and $\Sigma_{t_{k+1}^l}\subset \Sigma_{t_{k}^{l+1}}$. This fact and Lemma \ref{meas_Xgamma} imply that for $\phi_l(\delta_{k+1}^{l+1})=X(t_k^l+h\gamma_{k+1}^{l+1})$, where $\phi_l$ is defined in \eqref{def_phi_l}, we have 
\begin{equation}
\label{meas_phi_l_d}
\sigma(\phi_l(\delta_{k+1}^{l+1}))\subset\sigma(\{\gamma_{k+1}^{l+1}\})\vee\Sigma_{t_{k+1}^l}\subset\sigma(\{\gamma_{k+1}^{l+1}\})\vee\Sigma_{t_{k}^{l+1}},    
\end{equation}
and hence, $\phi_l(\delta_{k+1}^{l+1})$ is independent of $\Delta_k^{l+1}W$. Moreover, it follows from \eqref{def_fgl}, \eqref{aux_euler_l11} that 
\begin{eqnarray}
\label{aux_E_ykl_ind}
    &&\bar y_{k+1}^{l+1}=\bar y_k^{l+1}+h\cdot f(t_k^{l+1}+h\gamma_{k+1}^{l+1},\bar y_k^{l+1},\phi_l(\delta_{k+1}^{l+1}))\notag\\
    &&\quad\quad+ g(t_k^{l+1},\bar y_k^{l+1},\phi_l(t_k^{l+1}))\cdot\Delta_k^{l+1}W.
\end{eqnarray}
Hence, by \eqref{aux_E_ykl_ind}, \eqref{meas_phi_l_d}, and induction we get for $j\in [n]_0$ and $k\in [N]_0$ that 
\begin{equation}
\label{meas_bar_ykj}
    \sigma(\bar y_k^j)\subset \Sigma_{t_k^j}\vee\sigma(\{\gamma_1^0\ldots,\gamma_N^0,\ldots,\gamma_1^{j-1},\ldots,\gamma_N^{j-1},\gamma_1^j,\ldots,\gamma_k^j\}).
\end{equation}
Therefore, $\bar y_k^j$ is measurable with respect to larger $\sigma$-field than $y_k^j$ (see \eqref{sig_filed2}).
Note that $\bar y^{l}_k$  is not implementable. However, we use $\bar y^{l}_k$ only in order to estimate the error \eqref{eq:L2err} of (implementable scheme) $y^{l}_k$.
\begin{thm} 
\label{rate_of_conv_expl_Eul} 
Let $n\in\N_0$, $\tau\in (0,+\infty)$, $x_0\in\R^d$, and let $f,g$ satisfy the assumptions (A1), (A2), (A3') and (B1), (B2), (B3') for some $\varrho, \alpha_1, \alpha_2\in (0,1]$. There exist $C_0,C_1,\ldots,C_n\in (0,+\infty)$ such that for  all $N\geq \lceil \tau\rceil$ and $j=0,1,\ldots,n$ it holds
	\begin{equation}
	\label{error_main_thm}
		\Bigl\|\max\limits_{0\leq k\leq N}|X(t_k^j)-y_k^j|\Bigl\|_{L^p(\Omega)}\leq C_j h^{\min\{\varrho,\frac{1}{2}\}\alpha^{j}},
	\end{equation}
	where $\alpha:=\min\{\alpha_1,\alpha_2\}$, and, in particular, if $\alpha=1$ then
 \begin{equation}
	\label{sc_error_main_thm}
		\Bigl\|\max\limits_{0\leq k\leq N}|X(t_k^j)-y_k^j|\Bigl\|_{L^p(\Omega)}\leq C_j h^{\min\{\varrho,\frac{1}{2}\}}.
	\end{equation}
\end{thm}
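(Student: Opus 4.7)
The plan is to establish~\eqref{error_main_thm} by induction on $j$, using the auxiliary scheme $\{\bar y_k^j\}$ from~\eqref{aux_euler_1}--\eqref{aux_euler_l11} as a bridge. For each $j$ one inserts it into the triangle inequality
\begin{equation*}
\Bigl\|\max_{0\le k\le N}|X(t_k^j)-y_k^j|\Bigr\|_{L^p(\Omega)}\le A_j+B_j,
\end{equation*}
with $A_j:=\bigl\|\max_k|\phi_j(t_k^0)-\bar y_k^j|\bigr\|_{L^p(\Omega)}$ and $B_j:=\bigl\|\max_k|\bar y_k^j-y_k^j|\bigr\|_{L^p(\Omega)}$. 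The base case $j=0$ is immediate: the exact delay $\phi_{-1}\equiv x_0$ coincides with the numerical delay $y_k^{-1}=x_0$, so $\bar y_k^0=y_k^0$ for every $k$ (hence $B_0=0$), and $A_0$ is the $L^p$-error of the standard randomized Euler--Maruyama scheme for the SDE~\eqref{eqODE_0} with Borel-in-$t$ coefficients $f(\cdot,\cdot,x_0)$, $g(\cdot,\cdot,x_0)$. The known randomized Euler analysis then yields $A_0\le C_0\,h^{\min\{\varrho,1/2\}}$.

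For the inductive step, assume the bound for $j=l$. The auxiliary scheme $\bar y_k^{l+1}$ is precisely the randomized Euler--Maruyama scheme for the random-coefficient SDE~\eqref{eqODE_j_1}, started at $y_N^l$ rather than at $\phi_l(\tau)$. The randomization argument still applies: by~\eqref{meas_phi_l_d}, $\phi_l(\delta_{k+1}^{l+1})$ is measurable with respect to $\Sigma_{t_k^{l+1}}\vee\sigma(\gamma_{k+1}^{l+1})$ while $\gamma_{k+1}^{l+1}$ is independent of $\Sigma_{t_k^{l+1}}$, so conditioning on $\Sigma_{t_k^{l+1}}$ freezes $\phi_l$ and makes the randomised drift evaluation a conditionally unbiased estimator of the exact integrated drift across one sub-step. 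The drift defect is then a discrete martingale controlled via the Burkholder--Davis--Gundy inequality, the diffusion defect is handled by It\^o isometry together with~\eqref{assumpt_b31}, and in both the $L^p$-H\"older regularity of $\phi_l$ supplied by Theorem~\ref{sol_dde_prop_1} plays the role of temporal regularity of the delay input. Combining with a discrete Gronwall estimate and absorbing the starting defect gives
\begin{equation*}
A_{l+1}\le C\bigl(\|X(t_N^l)-y_N^l\|_{L^p(\Omega)}+h^{\min\{\varrho,1/2\}}\bigr)\le C'\,h^{\min\{\varrho,1/2\}\,\alpha^l},
\end{equation*}
where the second inequality uses the inductive hypothesis and $h\le 1$, itself a consequence of $N\ge\lceil\tau\rceil$.

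For $B_{l+1}$ the two schemes share the starting point $\bar y_0^{l+1}=y_0^{l+1}=y_N^l$, so the mismatch comes only from the delay inputs. Subtracting~\eqref{expl_euler_11} from~\eqref{aux_euler_l11} and applying~\eqref{assumpt_a31} and~\eqref{assumpt_b32} bounds the per-step error by a Lipschitz term $L_f|\bar y_k^{l+1}-y_k^{l+1}|$ plus H\"older-in-delay terms $L_f|\phi_l(\delta_{k+1}^{l+1})-y_k^l|^{\alpha_1}$ and $L_g|\phi_l(t_k^0)-y_k^l|^{\alpha_2}$. The Lipschitz contribution is absorbed by a discrete Gronwall inequality; the delay mismatches are decomposed as
\begin{equation*}
\phi_l(\delta_{k+1}^{l+1})-y_k^l=\bigl[\phi_l(\delta_{k+1}^{l+1})-\phi_l(t_k^0)\bigr]+\bigl[X(t_k^l)-y_k^l\bigr],
\end{equation*}
the first bracket is controlled in $L^p$ by $Ch^{1/2}$ via~\eqref{phi_j_Lipschitz}, the second by the inductive hypothesis, and the H\"older exponent $\alpha_i\in(0,1]$ is pulled through the $L^p$-norm by Jensen's inequality $\|Z^{\alpha_i}\|_{L^p(\Omega)}\le\|Z\|_{L^p(\Omega)}^{\alpha_i}$. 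Since $h\le 1$, one then obtains $B_{l+1}\le C\,h^{\min\{\varrho,1/2\}\,\alpha^{l+1}}$, and likewise $A_{l+1}\le C'\,h^{\min\{\varrho,1/2\}\,\alpha^l}\le C'\,h^{\min\{\varrho,1/2\}\,\alpha^{l+1}}$, which closes the induction.

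The main obstacle lies in the randomized Euler analysis that underlies $A_{l+1}$: since $f$ is only Borel measurable in $t$, classical deterministic quadrature estimates are out of reach, and the $O(h^{\min\{\varrho,1/2\}})$ rate for the drift defect is achievable only by treating the randomised midpoint evaluation as a conditional Monte Carlo estimator and invoking Burkholder--Davis--Gundy. This requires careful bookkeeping of the filtrations so that $\phi_l$ is genuinely frozen upon conditioning while $\gamma_{k+1}^{l+1}$ retains its uniform law, which is precisely why one works with the auxiliary sequence $\{\bar y_k^j\}$ whose filtration~\eqref{meas_bar_ykj} is strictly larger than that of the implementable $\{y_k^j\}$. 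A secondary subtlety is that each propagation from subinterval $l$ to $l+1$ costs one factor of $\alpha$ through Jensen's inequality, producing the geometric decay $\alpha^j$ in the exponent that ultimately prevents any uniform-in-$n$ bound.
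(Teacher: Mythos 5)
Your proposal follows essentially the same route as the paper's proof: induction over the subintervals, the triangle-inequality split through the auxiliary scheme $\bar y_k^j$, the randomized-quadrature/BDG treatment of the merely Borel-in-$t$ drift, Jensen's inequality to pull the H\"older exponents $\alpha_1,\alpha_2$ through the $L^p$-norm (which is exactly where the factor $\alpha$ per subinterval originates), and discrete Gronwall arguments for both halves. The one inaccuracy is the intermediate claim $A_{l+1}\le C\bigl(\|X(t_N^l)-y_N^l\|_{L^p(\Omega)}+h^{\min\{\varrho,1/2\}}\bigr)$: the time-increment of the frozen coefficient $g_{l+1}(t,x)=g(t+(l+1)\tau,x,\phi_l(t))$ also carries the delay input through the $\alpha_2$-H\"older condition, so the local contribution is really of order $h^{\min\{\varrho,\alpha_2/2\}}$ (the paper's estimate of $S^k_{4,l+1}$), which is weaker than what you wrote but still dominates $h^{\min\{\varrho,1/2\}\alpha^{l+1}}$, so your final conclusion is unaffected.
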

\begin{proof} 
    We start with the initial-vale problem \eqref{eq:dynamics} with $l=0$. Since $\bar y_0^0=y_0^0=x_0$, $f_0(t,x)=f(t,x,x_0)$ and $g_0(t,x)=g(t,x,x_0)$, we have that $\bar y_k^0=y_k^0$ for all $k=0,\ldots,N$. Moreover, by (A1), (A2), (A3') and (B1), (B2), (B3')  we have that $f_0$ and $g_0$ is Borel measurable, and for all $t, t_1, t_2\in [0,\tau]$ and  $x,y\in\R^d$ 
    \begin{eqnarray}
        &&|f_0(t,x)|\leq K_f(1+|x_0|+|x|),\notag\\
        &&|f_0(t,x)-f_0(t,y)|\leq L_f|x-y|.
    \end{eqnarray}
    and
     \begin{eqnarray}
       &&|g_0(t,x)|\leq  K_g(1+|x_0|+|x|),\notag\\
       &&|g_0(t,x)-g_0(t,y)|\leq K_g|x-y|\\
      &&|g_0(t_1,x)-g_0(t_2,x)|\leq \bar K_g(1+|x|+|x_0|)|t_1-t_2|^{\varrho}. \notag
    \end{eqnarray}
    Since for $l=0$ we deal with ordinary SDE, by Theorem \ref{sol_dde_prop_1} and by using analogous arguments as in the proof of Proposition 1 in \cite{PMPP2017} we get
\begin{eqnarray}
   && \Bigl\|\max\limits_{0\leq k\leq N}|\phi_0(t_k^0)-y_k^0|\Bigl\|_{L^p(\Omega)}\leq C_0h^{\min\{\varrho,\frac{1}{2}\}},\notag
\end{eqnarray}
where $C_0$ does not depend on $N$. Since $\phi_0(t_k^0)=X(t_k^0)$ we get \eqref{error_main_thm} for $j=0$.

Let us now assume that there exists $l\in\{0,1,\ldots,n-1\}$ for which there exists $C_l \in (0,+\infty)$ such that for all $N\geq \lceil \tau\rceil$ 
\begin{equation}
\label{ind_assumpt_1}
    \Bigl\|\max\limits_{0\leq k\leq N}|\phi_l(t_k^0)-y_k^l|\Bigl\|_{L^p(\Omega)}\leq C_l h^{\min\{\varrho,\frac{1}{2}\}\alpha^{l}}.
\end{equation}
We consider the initial-value problem \eqref{eq:dynamics} for $\phi_{l+1}$. By (A1), (A2), (A3') and (B1), (B2), (B3') we have that $f_{l+1}$ and $g_{l+1}$ are Borel measurable, and for all $t, t_1, t_2 \in [0,\tau]$,  $x,y\in\R^d$ 
    \begin{eqnarray}
    \label{eqn:fboundl+1}
        &&|f_{l+1}(t,x)|\leq K_f(1+|x|+|\phi_l(t)|),\notag\\
        &&|f_{l+1}(t,x)-f_{l+1}(t,y)|\leq L_f|x-y|.
    \end{eqnarray}
    and 
     \begin{eqnarray}
     \label{eqn:gboundl+2}
     &&|g_{l+1}(t,x)|\leq K_g(1+|x|+|\phi_l(t)|),\notag\\
        &&|g_{l+1}(t,x)-g_{l+1}(t,y)|\leq L_g|x-y|\\     &&|g_{l+1}(t_1,x)-g_{l+1}(t_2,x)|\leq \bar K_g (1+|x|+|\phi_l(t_1)|)|t_1-t_2|^{\varrho}\notag\\
        &&\quad\quad+L_g|\phi_l(t_1)-\phi_l(t_2)|^{\alpha_2}. \notag
    \end{eqnarray}
The following error decomposition holds
\begin{equation}\label{error_decomp}
\begin{aligned}
\Bigl\|\max\limits_{0\leq k\leq N}|\phi_{l+1}(t_k^0)-y_k^{l+1}|\Bigl\|_{L^p(\Omega)} &\leq \Bigl\|\max\limits_{0\leq k\leq N}|\phi_{l+1}(t_k^0)-\bar y_k^{l+1}|\Bigl\|_{L^p(\Omega)} \\
&\quad+\Bigl\|\max\limits_{0\leq k\leq N}|\bar y_k^{l+1}-y_k^{l+1}|\Bigl\|_{L^p(\Omega)}.
\end{aligned}
\end{equation}
Firstly, we estimate $\displaystyle{\Bigl\|\max\limits_{0\leq k\leq N}|\bar y_k^{l+1}-y_k^{l+1}|\Bigl\|_{L^p(\Omega)}}$. For $k\in [N]$ we get
\begin{align*}
\bar y^{l+1}_k-y^{l+1}_k &= \sum\limits_{j=1}^k(\bar y^{l+1}_j-\bar y^{l+1}_{j-1})-\sum\limits_{j=1}^k(y^{l+1}_j-y^{l+1}_{j-1})\notag\\
&=h\sum\limits_{j=1}^k\Bigl(f(\theta_j^{l+1},\bar y^{l+1}_{j-1},\phi_{l}(\delta_{j}^{l+1}))-f(\theta_j^{l+1},y^{l+1}_{j-1},y^l_{j-1})\Bigr)\\
&\quad+\sum\limits_{j=1}^k \Bigl(g(t_{j-1}^{l+1},\bar y^{l+1}_{j-1},\phi_{l}(t_{j-1}^{0}))-g(t_{j-1}^{l+1},y^{l+1}_{j-1},y^l_{j-1})\Bigr)\Delta_{j-1}^{l+1}W,
\end{align*}
where  $\Delta_{j-1}^{l+1}W=W(t_j^{l+1})-W(t_{j-1}^{l+1})$. This gives by the assumption \eqref{assumpt_b31}  that
\begin{align*}
   |\bar y^{l+1}_k-y^{l+1}_k|^p&\leq  C_pL_f^p \Big(h\sum\limits_{j=1}^k|\bar y^{l+1}_{j-1}-y^{l+1}_{j-1}|+h\sum\limits_{j=1}^k  |\phi_{l}(\delta_{j}^{l+1})-y^{l}_{j-1}|^{\alpha_1}\Big)^p\\
   &\quad +C_p\Big|\sum\limits_{j=1}^k \Bigl(g(t_{j-1}^{l+1},\bar y^{l+1}_{j-1},\phi_{l}(t_{j-1}^{0}))-g(t_{j-1}^{l+1},y^{l+1}_{j-1},y^l_{j-1})\Bigr)\Delta_{j-1}^{l+1}W\Big|^p\\
\end{align*}
Let us denote by 
\begin{equation}
\label{def_Glj}
G_{j-1}^{l+1}=g(t_{j-1}^{l+1},\bar y^{l+1}_{j-1},\phi_{l}(t_{j-1}^{0}))-g(t_{j-1}^{l+1},y^{l+1}_{j-1},y^l_{j-1}).
\end{equation}
 Hence, for $k\in[N]$ we get that
\begin{eqnarray}
\label{est_diff_byiyi_1}
    &&\mathbb{E}\Bigl[\max\limits_{0\leq i\leq k}|\bar y^{l+1}_i-y^{l+1}_i|^p\Bigr]\leq L_f^pC_p\tau^{p-1}h\sum\limits_{j=1}^k\mathbb{E}\Bigl[\max\limits_{0\leq i\leq j-1}|\bar y^{l+1}_i-y^{l+1}_i|^p\Bigr]\notag\\
    &&+C_pL_f^p\tau^{p-1}h\sum\limits_{j=1}^k\mathbb{E}\Bigl[|\phi_l(\delta_j^{l+1})-y_{j-1}^l|^{\alpha_1 p}\Bigr]+C_p\mathbb{E}\Bigl[\max\limits_{1\leq i\leq k}\Bigl|\sum\limits_{j=1}^iG_{j-1}^{l+1}\cdot \Delta_{j-1}^{l+1}W\Bigl|^p\Bigr]
\end{eqnarray}
Moreover
\begin{align*}
\mathbb{E}[|\phi_{l}(\delta_{j}^{l+1})-y^{l}_{j-1}|^{\alpha_1 p}] 
&\leq C_p \mathbb{E}[|\phi_{l}(\delta_{j}^{l+1})-\phi_{l}(t_{j-1}^0)|^{\alpha_1 p}]+C_p \mathbb{E}\Bigl[\max\limits_{0\leq k\leq N}|\phi_{l}(t_k^0)-y_k^l|^{\alpha_1 p}\Bigr]
\end{align*}
where by Theorem \ref{sol_dde_prop_1}, Jensen inequality (applied to the concave function $[0,+\infty)\ni x\to x^{\alpha_1}$), and \eqref{ind_assumpt_1} we have 
\begin{equation}
    \label{est_01}
    \mathbb{E}[|\phi_{l}(\delta_{j}^{l+1})-\phi_{l}(t_{j-1}^0)|^{\alpha_1 p}]\leq \Bigl(\mathbb{E}[|\phi_{l}(\delta_{j}^{l+1})-\phi_{l}(t_{j-1}^0)|^{ p}]\Bigr)^{\alpha_1}\leq \bar K^{\alpha_1}_l h^{\alpha_1p/2},
\end{equation}
and
\begin{equation}
\label{est_1}
    \mathbb{E}\Bigl[\max\limits_{0\leq k\leq N}|\phi_{l}(t_k^0)-y_k^l|^{\alpha_1 p}\Bigr]\leq\Biggl( \mathbb{E}\Bigl[\max\limits_{0\leq k\leq N}|\phi_{l}(t_k^0)-y_k^l|^{p}\Bigr]\Biggr)^{\alpha_1}.
\end{equation}
Now define for all $t\in [0,t_N^{l+1}]$ the following stochastic process
\begin{align*}
   M(t):=  \int_0^t\sum\limits_{j=1}^N
   G_{j-1}^{l+1}\cdot
   {\bf 1}_{[t_{j-1}^{l+1},t_{j}^{l+1})}(s)\,\mathrm{d}W(s).
\end{align*}
Note that by \eqref{def_Glj}, \eqref{sig_filed2}, and \eqref{meas_bar_ykj} we have that
\begin{equation}
   \sigma(G_{j-1}^{l+1})\subset\tilde\Sigma_{t_{j-1}^{l+1}}=\Sigma^{l+1}_{t_{j-1}^0}\vee\sigma(\{\gamma_k^j\}_{j\in\mathbb{N}_0,k\in\mathbb{N}}), 
\end{equation} 
and, hence, $G_{j-1}^{l+1}$ is independent of $\Delta_{j-1}^{l+1}W$. Therefore, the stochastic It\^o integral above is well-defined. Moreover, the quadratic variation of the martingale $M$  for $t\in [0,t_N^{l+1}]$ is as follows
\begin{align*}
    \langle M\rangle(t)&=\int_0^t\sum\limits_{j=1}^N |G_{j-1}^{l+1}|^2\cdot {\bf 1}_{[t_{j-1}^{l+1},t_{j}^{l+1})}(s)\,\mathrm{d}s
\end{align*}
and then for $k\in [N]$
\begin{align*}
    \langle M\rangle (t_k^{l+1}
) &= h\sum\limits_{j=1}^k |G_{j-1}^{l+1}|^2.
\end{align*}
 From Burkholder-Davis-Gundy inequality (see, for example, Corollary 2.9, page 82 in \cite{PARRAS}), Jensen's inequality and the assumption  \eqref{assumpt_b32} on $g$, we have that 
 \begin{eqnarray}
 \label{est_int_Gij}
     &&\mathbb{E}\Bigl[\max\limits_{1\leq i\leq k}\Bigl|\sum\limits_{j=1}^iG_{j-1}^{l+1}\cdot \Delta_{j-1}^{l+1}W\Bigl|^p\Bigr]=\mathbb{E}\Bigl[\max\limits_{1\leq i \leq k}|M(t_i^{l+1})|^p\Bigr]\leq \mathbb{E}\Bigl[\sup\limits_{0\leq t\leq t_k^{l+1}}|M(t)|^p\Bigr]\notag\\
     &&\leq C_p\mathbb{E}\Bigl( \langle M\rangle (t_k^{l+1}
)\Bigr)^{p/2}\leq C_p\tau^{\frac{p}{2}-1}h\sum\limits_{j=1}^k|G_{j-1}^{l+1}|^p\notag\\
&&\leq \bar C_p\tau^{\frac{p}{2}-1}L_g^ph\sum\limits_{j=1}^k\mathbb{E}\Bigl[\max\limits_{0\leq i\leq j-1}|\bar y^{l+1}_i-y^{l+1}_i|^p\Bigr]\notag\\
&&\quad\quad+\bar C_p\tau^{\frac{p}{2}}L_g^p\Bigl(\mathbb{E}\Bigl[\max\limits_{0\leq k\leq N}|\phi_l(t_k^0)-y_k^l|^{p}\Bigr]\Bigr)^{\alpha_2}.
 \end{eqnarray}
 Combining \eqref{est_diff_byiyi_1}, \eqref{est_01}, \eqref{est_1}, \eqref{est_int_Gij} and using the fact that $y_0^{l+1}=\bar y_0^{l+1}$ we arrive  for $k\in [N]$ at
 \begin{eqnarray}
     &&\mathbb{E}\Bigl[\max\limits_{0\leq i\leq k}|\bar y^{l+1}_i-y^{l+1}_i|^p\Bigr]\leq c_1 h\sum\limits_{j=1}^{k-1}\mathbb{E}\Bigl[\max\limits_{0\leq i\leq j}|\bar y^{l+1}_i-y^{l+1}_i|^p\Bigr]+c_2 h^{\alpha_1 p/2}\notag\\
     &&+c_3\Biggl( \mathbb{E}\Bigl[\max\limits_{0\leq k\leq N}|\phi_{l}(t_k^0)-y_k^l|^{p}\Bigr]\Biggr)^{\alpha_1}+c_4\Biggl( \mathbb{E}\Bigl[\max\limits_{0\leq k\leq N}|\phi_{l}(t_k^0)-y_k^l|^{p}\Bigr]\Biggr)^{\alpha_2}.
 \end{eqnarray}
 By the discrete version of Gronwall's lemma (see, for example, Lemma 2.1. in 
 \cite{RKYW2017}) we get
 \begin{eqnarray}
 \label{est_diff_bykyk}
     &&\mathbb{E}\Bigl[\max\limits_{0\leq k\leq N}|\bar y^{l+1}_k-y^{l+1}_k|^p\Bigr]\leq K_{1,l}e^{K_{2,l}\tau}\Biggl[h^{\alpha_1 p/2}+\Biggl( \mathbb{E}\Bigl[\max\limits_{0\leq k\leq N}|\phi_{l}(t_k^0)-y_k^l|^{p}\Bigr]\Biggr)^{\alpha_1}\notag\\
     &&\quad\quad+\Biggl( \mathbb{E}\Bigl[\max\limits_{0\leq k\leq N}|\phi_{l}(t_k^0)-y_k^l|^{p}\Bigr]\Biggr)^{\alpha_2}\Biggr].
 \end{eqnarray}
We now establish an upper bound on  $\displaystyle{\Bigl\|\max\limits_{0\leq i\leq N}|\phi_{l+1}(t_i^0)-\bar y_i^{l+1}|\Bigl\|_{L^p(\Omega)}}$. For $k\in [N]$ we have
\begin{eqnarray}
        &&\phi_{l+1}(t_k^0)-\bar y^{l+1}_k=\phi_{l+1}(0)-\bar y_0^{l+1}+(\phi_{l+1}(t_k^0)-\phi_{l+1}(t_0^0))-(\bar y^{l+1}_k-\bar y^{l+1}_0)\notag\\
        &&=(\phi_l(t_N^0)-y_N^l)+\sum\limits_{j=1}^k(\phi_{l+1}(t_j^0)-\phi_{l+1}(t_{j-1}^0))-\sum\limits_{j=1}^k(\bar y^{l+1}_j-\bar y^{l+1}_{j-1})\notag\\
        &&=(\phi_l(t_N^0)-y_N^l)+\sum\limits_{j=1}^k\Biggl(\int\limits_{t_{j-1}^0}^{t_{j}^0}f_{l+1}(s,\phi_{l+1}(s))\,\de s-h f_{l+1}(\delta_j^{l+1},\bar y^{l+1}_{j-1})\Biggr)\notag\\
        &&\quad+\sum\limits_{j=1}^k\Biggl(\int\limits_{t_{j-1}^0}^{t_{j}^0}g_{l+1}(s,\phi_{l+1}(s))\,\de W_{l+1}(s)-g_{l+1}(t_{j-1}^{0}, \bar y_{j-1}^{l+1})\Delta_{j-1}^{l+1} W\Biggr)\notag\\
        \label{err_phi_byk_decomp1}
        &&=(\phi_l(t_N^0)-y_N^l)+\sum\limits_{i=1}^6 S^k_{i,l+1},
\end{eqnarray}
where
\begin{align*}
S^k_{1,l+1} &= \sum\limits_{j=1}^k \Biggl(\int\limits_{t_{j-1}^0}^{t_{j}^0}f_{l+1}(s,\phi_{l+1}(s))\,\de s-h f_{l+1}(\delta_j^{l+1},\phi_{l+1}(\delta_j^{l+1}))\Biggr), \\
S^k_{2,l+1} &= h\sum\limits_{j=1}^k\Bigl(f_{l+1}(\delta_j^{l+1},\phi_{l+1}(\delta_j^{l+1}))-f_{l+1}(\delta_j^{l+1},\phi_{l+1}(t_{j-1}^0))\Bigr), \\
S^k_{3,l+1} &= h\sum\limits_{j=1}^k\Bigl(f_{l+1}(\delta_j^{l+1},\phi_{l+1}(t_{j-1}^0))-f_{l+1}(\delta_j^{l+1},\bar y^{l+1}_{j-1})\Bigr),\\
S^k_{4,l+1} &= \int\limits_{0}^{t_{k}^0}\sum\limits_{j=1}^N\big(g_{l+1}(s,\phi_{l+1}(s))-g_{l+1}(t_{j-1}^{0}, \phi_{l+1}(s))\big)\mathbf{1}_{[t_{j-1}^0, t_{j}^0)}(s)\,\de W_{l+1}(s),\\
S^k_{5,l+1} &= \int\limits_{0}^{t_{k}^0}\sum\limits_{j=1}^N \big(g_{l+1}(t_{j-1}^{0},\phi_{l+1}(s))-g_{l+1}(t_{j-1}^{0}, \phi_{l+1}(t_{j-1}^{0}))\big)\mathbf{1}_{[t_{j-1}^0, t_{j}^0)}(s)\,\de W_{l+1}(s),\\
S^k_{6,l+1} &= \int\limits_{0}^{t_{k}^0}\sum\limits_{j=1}^N\big(g_{l+1}(t_{j-1}^{0}, \phi_{l+1}(t_{j-1}^{0}))-g_{l+1}(t_{j-1}^{0}, \bar y_{j-1}^{l+1})\big)\mathbf{1}_{[t_{j-1}^0, t_{j}^0)}(s)\,\de W_{l+1}(s).
\end{align*}
We have that
\begin{equation}
    S_{1,l+1}^k=\int\limits_0^{t_k^0}Y_{l+1}(s)\,\de s-h\sum\limits_{k=1}^k Y_{l+1}(\delta_j^{l+1}),
\end{equation}
where $\delta_j^{l+1}=t^0_{j-1}+h\gamma_{j}^{l+1}$,  $Y_{l+1}(t)=f_{l+1}(t,\phi_{l+1}(t))=f\big(t+(l+1)\tau, \phi_{l+1}(t),\phi_{l}(t)\big)$, and $\sigma((Y(t))_{t\in [0,\tau]})\subset\Sigma_{\infty}$. Hence, the process $Y$ is independent of the $\sigma$-field $\sigma(\{\gamma_k^j\}_{j\in\mathbb{N}_0,k\in\mathbb{N}})$. Moreover,  by Theorem \ref{sol_dde_prop_1} we arrive at
\begin{equation}
    \|f_{l+1}(\cdot,\phi_{l+1}(\cdot))\|_{L^p([0,\tau]\times\Omega;\mathbb{R}^d)}\leq T^{1/p}C_pK_f(1+K_l+K_{l+1})^{1/p}<+\infty.
\end{equation}
Therefore, by Theorem  4.1 in \cite{RKYW2019} we obtain 
\begin{equation}
\label{est_S1}
    \Bigl\|\max\limits_{1\leq k\leq N}|S^k_{1,l+1}|\Bigl\|_{L^p(\Omega)}\leq 2C_p\tau^{\frac{p-2}{2p}} (1+K_l)(1+K_{l+1}) h^{1/2}.
\end{equation}
By Jensen's inequality we get 
\begin{align*}
        \max\limits_{1\leq k\leq N}|S^k_{2,l+1}|^p\leq \tau^{p-1}L_f^ph\sum\limits_{j=1}^N |\phi_{l+1}(\delta_j^{l+1})-\phi_{l+1}(t_{j-1}^0)|^p .
\end{align*}
Thus, by Theorem \ref{sol_dde_prop_1}
\begin{equation}
\label{est_S2}
\begin{aligned}
    \Bigl\|\max\limits_{1\leq k\leq N}|S^k_{2,l+1}|\Bigl\|_{L^p(\Omega)}&\leq  L_f c_p\tau K(\tau^{1/2}+1)(1+K_{l}^{1/p}+K_{l+1}^{1/p})h^{1/2}.
\end{aligned}
\end{equation}
Moreover,
\begin{equation}
\label{est_S3}
\begin{aligned}
   &\mathbb{E}\Bigl[\max\limits_{1\leq i\leq k}|S^i_{3,l+1}|^p\Bigr]\leq C_pL_f^p \tau^{p-1} h\mathbb{E}|\phi_l(t_N^0)-y_N^l|^p\\
   &+C_p L_f^p\tau^{p-1} h\sum\limits_{j=1}^{k-1}\mathbb{E}\Bigl[\max\limits_{0\leq i \leq j}|\phi_{l+1}(t_i^0)-\bar y^{l+1}_i|^p\Bigr].
\end{aligned}
\end{equation}
Regarding $S_{4,l+1}^k$ we shall apply  Burkholder-Davis-Gundy inequality (see Corollary 2.9, page 82 in \cite{PARRAS}), Theorem \ref{sol_dde_prop_1} and  \eqref{eqn:gboundl+2} on $g_{l+1}$, and get
\vspace{-1cm}
\begin{align}\label{est_S4}
\begin{split}
      &\mathbb{E}\Bigl[\max\limits_{1\leq k\leq N}|S^k_{4,l+1}|^p\Bigr]\\
      &\leq C_p \mathbb{E}\Bigg( \int\limits_{0}^{t_{N}^0}\sum\limits_{j=1}^N\big|g_{l+1}(s,\phi_{l+1}(s))-g_{l+1}(t_{j-1}^{0}, \phi_{l+1}(s))\big|^2\mathbf{1}_{[t_{j-1}^0, t_{j}^0)}(s)\,\de s \Bigg)^{p/2}\\
      &=C_p \mathbb{E}\Bigg( \sum\limits_{j=1}^N\int\limits_{t_{j-1}^0}^{t_{j}^0}\big|g_{l+1}(s,\phi_{l+1}(s))-g_{l+1}(t_{j-1}^{0}, \phi_{l+1}(s))\big|^2\,\de s \Bigg)^{p/2}\\
      &\leq C_p N^{\frac{p}{2}-1} \sum\limits_{j=1}^N\mathbb{E}\Bigg(\int\limits_{t_{j-1}^0}^{t_{j}^0}\big|g_{l+1}(s,\phi_{l+1}(s))-g_{l+1}(t_{j-1}^{0}, \phi_{l+1}(s))\big|^2\,\de s \Bigg)^{p/2}\\
      &\leq C_p\tau^{\frac{p}{2}-1}\sum\limits_{j=1}^N\int\limits_{t_{j-1}^0}^{t_{j}^0}\mathbb{E}|g_{l+1}(s,\phi_{l+1}(s))-g_{l+1}(t_{j-1}^{0}, \phi_{l+1}(s))|^p\, \de s\\
      &\leq \tilde C_{1,l}  \sum\limits_{j=1}^N \int\limits_{t_{j-1}^0}^{t_{j}^0} |s-t_{j-1}^0|^{p\varrho}\cdot \mathbb{E}\big(1+|\phi_{l+1}(s)|+|\phi_{l}(s)|)^p\,\de s \\
      &\qquad + \tilde C_{2,l}  \sum\limits_{j=1}^N \int\limits_{t_{j-1}^0}^{t_{j}^0} \mathbb{E}\big|\phi_{l}(s)-\phi_{l}(t_{j-1}^0)\big|^{p\alpha_2}\,\de s \\
      &\leq \tilde K_{1,l} h^{p\varrho}+\tilde C_{2,l}  \sum\limits_{j=1}^N \int\limits_{t_{j-1}^0}^{t_{j}^0}\Bigl(\mathbb{E}\big|\phi_{l}(s)-\phi_{l}(t_{j-1}^0)\big|^{p}\Bigr)^{\alpha_2}\,\de s\\
      &\leq  \tilde K_{1,l} h^{p\varrho}+\tilde K_{2,l} h^{p\alpha_2/2}\leq \tilde K_{3,l}h^{p\min\{\varrho,\alpha_2/2\}}.
  \end{split}
\end{align}
Following the similar arguments as in Eqn.\eqref{est_S4}, and utilising Theorem \ref{sol_dde_prop_1} together with the  Assumption \eqref{eqn:gboundl+2} on $g_{l+1}$ we get 
\begin{align}\label{est_S5}
\begin{split}
          &\mathbb{E}\Bigl[\max\limits_{1\leq k\leq N}|S^k_{5,l+1}|^p\Bigr]\\&\leq C_p \mathbb{E}\Bigg( \int\limits_{0}^{t_{N}^0}\sum\limits_{j=1}^N\big|g_{l+1}(t_{j-1}^{0},\phi_{l+1}(s))-g_{l+1}(t_{j-1}^{0}, \phi_{l+1}(t_{j-1}^{0}))\big|^2\mathbf{1}_{[t_{j-1}^0, t_{j}^0)}(s)\,\de s \Bigg)^{p/2}\\
      & \leq C_pL_g^p  \mathbb{E}\Bigg( \sum\limits_{j=1}^N\int\limits_{t_{j-1}^0}^{t_{j}^0}\big|\phi_{l+1}(s)- \phi_{l+1}(t_{j-1}^{0})\big|^2\,\de s \Bigg)^{p/2}\\
      &\leq C_p L_g^p N^{\frac{p}{2}-1} \sum\limits_{j=1}^N \mathbb{E}\Bigg(  \int\limits_{t_{j-1}^0}^{t_{j}^0}\big|\phi_{l+1}(s)- \phi_{l+1}(t_{j-1}^{0})\big|^2\,\de s \Bigg)^{p/2}\\
      &\leq C_pL_g^p \tau^{\frac{p}{2}-1} \sum\limits_{j=1}^N  \int\limits_{t_{j-1}^0}^{t_{j}^0}\mathbb{E}\big[\big|\phi_{l+1}(s)- \phi_{l+1}(t_{j-1}^{0})\big|^p\big]\,\de s \leq \tilde K_{4,l} h^{p/2}.
\end{split}
\end{align}
For the last term $S^k_{6,l+1}$ we use similar arguments as above and obtain
\begin{align*}
     & \mathbb{E}\Bigl[\max\limits_{1\leq i\leq k}|S^i_{6,l+1}|^p\Bigr] \leq C_p L_g^{p/2} \tau^{p/2-1} h\mathbb{E}|\phi_l(t_N^0)-y^l_N|^p\\
     &+C_p L_g^{p/2} \tau^{p/2-1} h \sum\limits_{j=1}^{k-1}  \mathbb{E}\Bigl[\max\limits_{0\leq i \leq j} \big| \phi_{l+1}(t_{i}^{0})-\bar y^{l+1}_i\big|^p\Bigr].
\end{align*}
Hence, from \eqref{err_phi_byk_decomp1} and \eqref{est_S3} we have for $k\in\{1,2,\ldots,N\}$
\begin{align*}
    &\mathbb{E}\Bigl[\max\limits_{0\leq i\leq k}|\phi_{l+1}(t_i^0)-\bar y^{l+1}_i|^p\Bigr]\leq K_1\mathbb{E}|\phi_l(t_N^0)-y_N^l|^p \\
    &+ c_p\sum\limits_{m\in\{1,2,4,5\}}\mathbb{E}\Bigl[\max\limits_{1\leq k\leq N}|S^k_{m,l+1}|^p\Bigr] \\
    &+K_2 h \sum\limits_{j=1}^{k-1}  \mathbb{E}\Bigl[\max\limits_{0\leq i \leq j} \big| \phi_{l+1}(t_{i}^{0})-\bar y^{l+1}_i\big|^p\Bigr].
\end{align*}
Now by using  Gronwall's lemma (see,  Lemma 2.1 in \cite{RKYW2017}), \eqref{ind_assumpt_1}, and \eqref{est_S1} to \eqref{est_S5} we get for all $k\in [N]$
\begin{align*}
    &\mathbb{E}\Bigl[\max\limits_{0\leq i\leq k}|\phi_{l+1}(t_i^0)-\bar y^{l+1}_i|^p\Bigr]\leq K_3\Bigl(\mathbb{E}|\phi_l(t_N^0)-y_N^l|^p \\
    &+ \sum\limits_{m\in\{1,2,4,5\}}\mathbb{E}\Bigl[\max\limits_{1\leq k\leq N}|S^k_{m,l+1}|^p\Bigr]\Bigl),
\end{align*}
which gives
\begin{equation}
\label{error_phi_byk2}
    \Bigl\|\max\limits_{0\leq i\leq N}|\phi_{l+1}(t_i^0)-\bar y_i^{l+1}|\Bigl\|^p_{L^p(\Omega)}\leq K_{5,l}\Bigl(\mathbb{E}\Bigl[\max\limits_{0\leq k\leq N}|\phi_{l}(t_k^0)-y^{l}_k|^p\Bigr]+h^{p\min\{\varrho,\alpha_2/2\}}\Bigr).
\end{equation}
Combining \eqref{error_decomp}, \eqref{est_diff_bykyk}, and \eqref{error_phi_byk2} we obtain
\begin{eqnarray}
     &&\Bigl\|\max\limits_{0\leq i\leq N}|\phi_{l+1}(t_i^0)- y_i^{l+1}|\Bigl\|_{L^p(\Omega)}\leq K_{6,l}\Bigl ( h^{\min\{\varrho,\alpha_1/2,\alpha_2/2\}}+\Bigl\|\max\limits_{0\leq i\leq N}|\phi_{l}(t_i^0)- y_i^{l}|\Bigl\|_{L^p(\Omega)}\notag\\
     &&\quad\quad+\Bigl\|\max\limits_{0\leq i\leq N}|\phi_{l}(t_i^0)- y_i^{l}|\Bigl\|^{\alpha_1}_{L^p(\Omega)}+\Bigl\|\max\limits_{0\leq i\leq N}|\phi_{l}(t_i^0)- y_i^{l}|\Bigl\|^{\alpha_2}_{L^p(\Omega)}\Bigr),
\end{eqnarray}
and therefore
\begin{equation}
     \Bigl\|\max\limits_{0\leq i\leq N}|\phi_{l+1}(t_i^0)- y_i^{l+1}|\Bigl\|_{L^p(\Omega)}\leq C_{l+1} h^{\min\{\varrho,\frac{1}{2}\}\alpha^{l+1}},
\end{equation}
which ends the inductive part of the proof. Finally,  $\phi_{l+1}(t_i^0)=\phi_{l+1}(ih)=X(ih+(l+1)\tau)=X(t_i^{l+1})$ and the proof of \eqref{error_main_thm} is finished. 
\end{proof}
\begin{rem}
We briefly comment on optimality of the defined algorithm in the Information-Based Complexity sense, see \cite{TWW88}. In the special case $\alpha_1=\alpha_2=1$ we have the optimal bound $\Theta(h^{\min\{\varrho,1/2\}})$, which follows from Proposition 5.1 in \cite{PPPM2014}.
\end{rem}
\section{Numerical experiments}
In order to illustrate our theoretical findings we perform several numerical experiments. We chose the following exemplary right-hand side functions:
\begin{equation}\label{eq:f1}
    f_i(t,x,z)=k_i(t) \Bigl( x +0.01|z|^{\alpha_1}+ \sin (10 x) \cdot \cos(100|z|^{\alpha_1} ) \Bigr),
\end{equation}
and the diffusion is either additive \begin{equation}\label{eq:g1}
    g_1(t,x,z)=0.5 |\cos(2^5\pi t)| ,
\end{equation}
or multiplicative 
\begin{equation}\label{eq:f2}
    g_2(t,x,z)=k(t) \Bigl( x +0.01|z|^{\alpha_2}+ \cos (10 x) \cdot \cos(100|z|^{\alpha_2} ) \Bigr),
\end{equation}
where, $\alpha_1,\alpha_2\in (0,1]$, $k_1$ is the following periodic function
\begin{equation}\label{eqn:k1}
    k_1(t)=\sum_{j=0}^n \Bigl((j+1)\tau-t\Bigr)^{-1/\gamma_1}\cdot\mathbf{1}_{[j\tau,(j+1)\tau]}(t),\quad \gamma_1> 2,\end{equation}
which belongs to $L^p([0,(n+1)\tau])$,  $k_2$ is a step function satisfying
\begin{align}\label{eqn:k2}
    k_2(t)=\sum_{j=0}^n 0.1\cdot(j+1)\cdot\mathbf{1}_{[j\tau,(j+1)\tau]}(t),  
\end{align} and $k$ is given by
\begin{equation}
    k(t)=t^{\gamma_2}, \quad \gamma_2\in (0,1],
\end{equation}
which is $\gamma_2$-H\"older continuous. Note that formally $f_1$ does not satisfy our assumptions, however, we get numerical evidence that we probably could extend result of Theorem \ref{rate_of_conv_expl_Eul} to the case when $L_f,L_g$ are integrable funtions.\\
We implement randomized Euler-Maruyama scheme \eqref{expl_euler_1}-\eqref{expl_euler_11} using Python programming language. Moreover, since for the right-hand side functions \eqref{eq:f1} plus \eqref{eq:g1} or  \eqref{eq:f1} plus \eqref{eq:f2} we do not know the exact solution $X(t)$, we  approximate the mean square error  \eqref{eq:L2err} for each $0\leq j\leq n$ with
\[
\left\|\max_{0\leq i\leq N}|\tilde y_i^j-y_i^j|\right\|_{L^2(\Omega)}\approx\left(\frac1K\sum_{k=1}^{K}\max_{0\leq i\leq N}|\tilde y_i^j(\omega_k)-y_i^j(\omega_k)|^2\right)^\frac12,
\]
where $K\in\mathbb{N}$, $\{\omega_k\}_{k=1}^K$ represents the $k$th realisation from the complete probability space, $y_i^j$ is the output of the randomized Euler-Maruyama scheme on the initial mesh $t_i^j\udef j\tau+ih$ and $h\udef\frac{\tau}{N}$ for $i=0,\ldots,N-1$, while $\tilde y_i^j$ is the  reference solution obtained also from the randomized Euler-Maruyama scheme but on the refined mesh $\tilde t_i^j\udef j\tau+i\tilde h$ and $\tilde h\udef\frac{h}{m}=\frac{\tau}{mN}$ for $i=0,\ldots,mN-1$. Note that $\{\omega_k\}_{k=1}^K$ is generated on the refined mesh.\\

The implementation of the randomized Euler-Maruyama method
method is straightforward. To evaluate the solution at time point $t^j_i=ih+j\tau$ within the interval $[j\tau,(j+1)\tau]$, we need two steps:

{\bf Step (j1):} First simulate $\gamma \sim \mathcal{U}(0,1)$ and set random time point $
    t^j_{i-1}+ \gamma h\in [t^j_{i-1},t^j_i]$.
    
{\bf Step (j2):}  Compute $y_{i}^j$ as defined in \eqref{expl_euler_1} and \eqref{expl_euler_11}.

Listing~\ref{py:RandEM} shows an implementation  of method \eqref{expl_euler_1} and \eqref{expl_euler_11} in
the case of a 1-dimensional Wiener process ($m=1$) in \textsc{Python}.

\lstinputlisting[caption={A sample implementation of \eqref{expl_euler_1} and \eqref{expl_euler_11} in
\textsc{Python}}, 
language=Python, label=py:RandEM]{RandEulerSDDE.py}
\begin{exm} [Additive noise]
\normalfont
In the following numerical tests we use \eqref{eq:f1} with \eqref{eqn:k1} and \eqref{eq:g1} (So in this case we formally have $\alpha_2=1$.). We fix the number of experiments $K=1000$ for each $N=2^l$, $l=5,\ldots,10$, and the reference solution is computed with stepsize $2^{-17}$; also, the horizon parameter is $n=3$. We get the following results for $\gamma_1=3$: \\
letting $\alpha_1=0.1$, the negative mean square error slopes are 
$0.65$, $0.65$, and $0.62$. See Figure \ref{fig:alpha0.1,gamma3,g1}; \\
letting $\alpha_1=0.5$, the negative mean square error slopes are
$0.65$, $0.64$, and $0.62$. See Figure \ref{fig:alpha0.5,gamma3,g1}; \\
letting $\alpha_1=1$,  the negative mean square error slopes are
$0.63$, $0.63$, and $0.58$. See Figure \ref{fig:alpha1,gamma3,g1}; \\
while, for $\gamma_1=5$: \\
letting $\alpha_1=0.1$, the negative mean square error slopes are
$0.60$, $0.63$, and $0.61$. See Figure \ref{fig:alpha0.1,gamma5,g1}; \\
letting $\alpha_1=0.5$, the negative mean square error slopes are
$0.65$, $0.65$, and $0.62$. See Figure \ref{fig:alpha0.5,gamma5,g1}; \\
letting $\alpha_1=1$, the negative mean square error slopes are
$0.63$,  $0.63$ and $0.59$. See Figure \ref{fig:alpha1,gamma5,g1}.

\begin{figure}
\centering
\begin{subfigure}{0.32\textwidth}
    \centering
    \includegraphics[width=\textwidth]{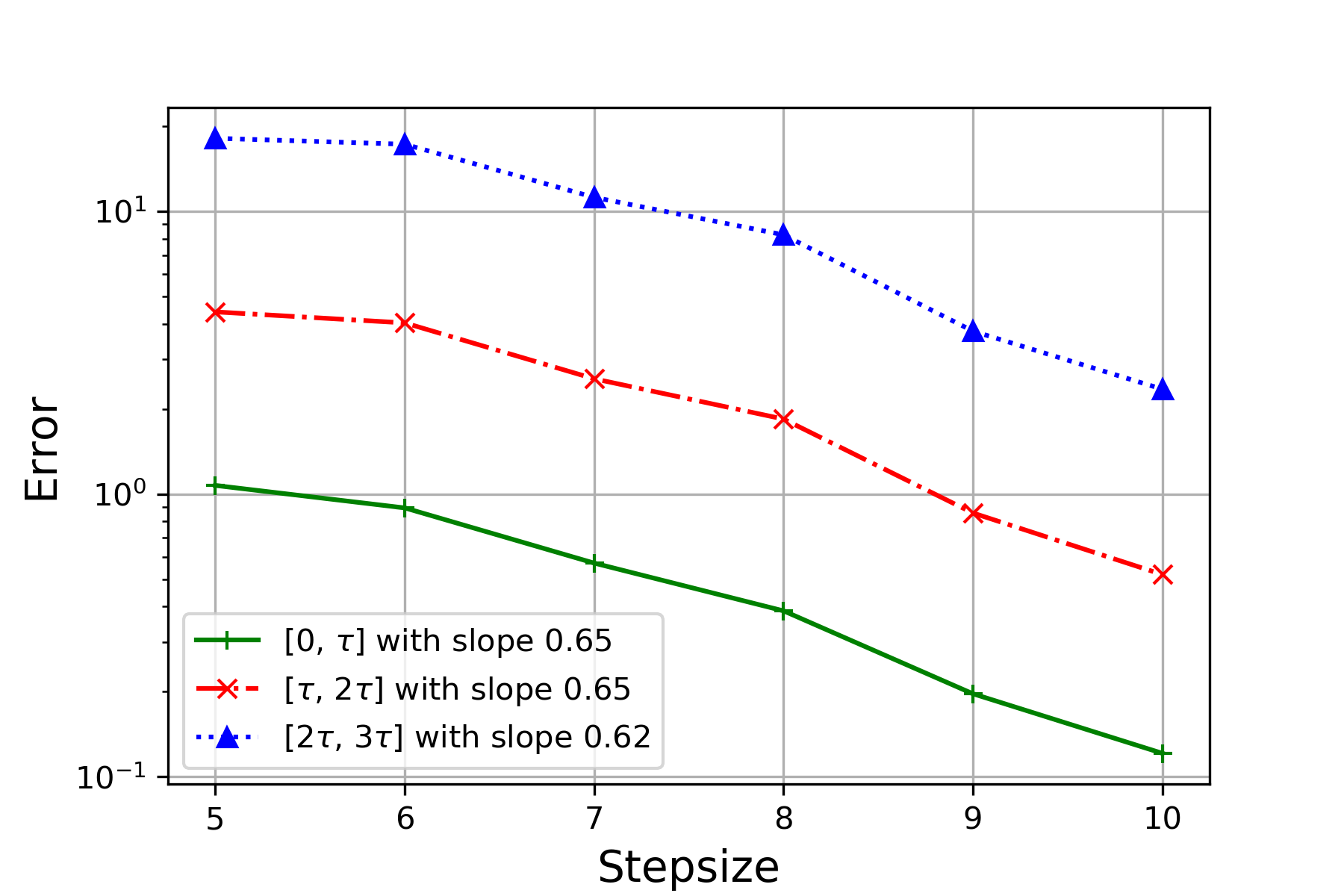}
    \caption{$\alpha=0.1$. \label{fig:alpha0.1,gamma3,g1}}
\end{subfigure}
\begin{subfigure}{0.32\textwidth}
    \centering
    \includegraphics[width=\textwidth]{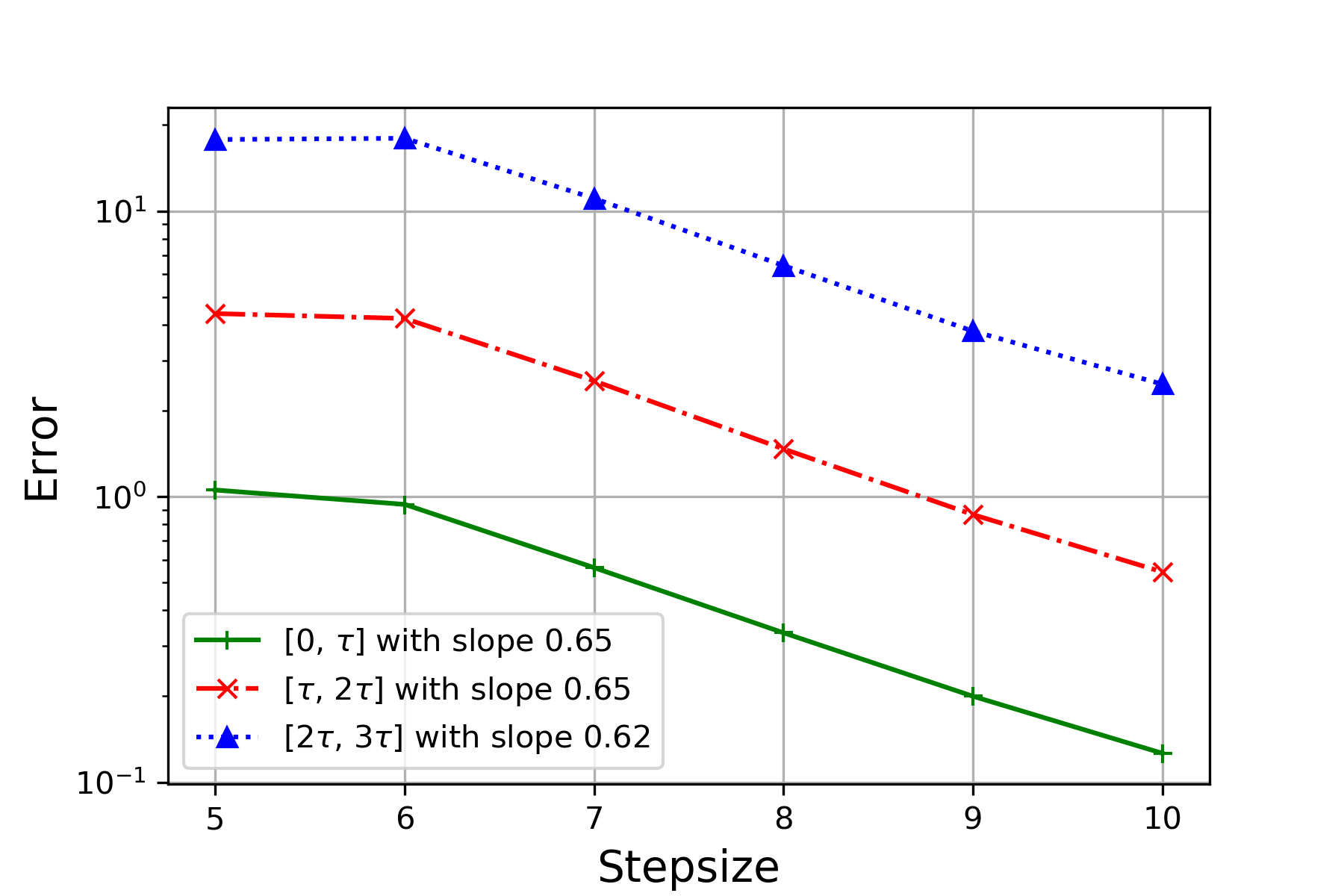}
    \caption{$\alpha=0.5$. \label{fig:alpha0.5,gamma3,g1}}
\end{subfigure}
\begin{subfigure}{0.32\textwidth}
    \centering
    \includegraphics[width=\textwidth]{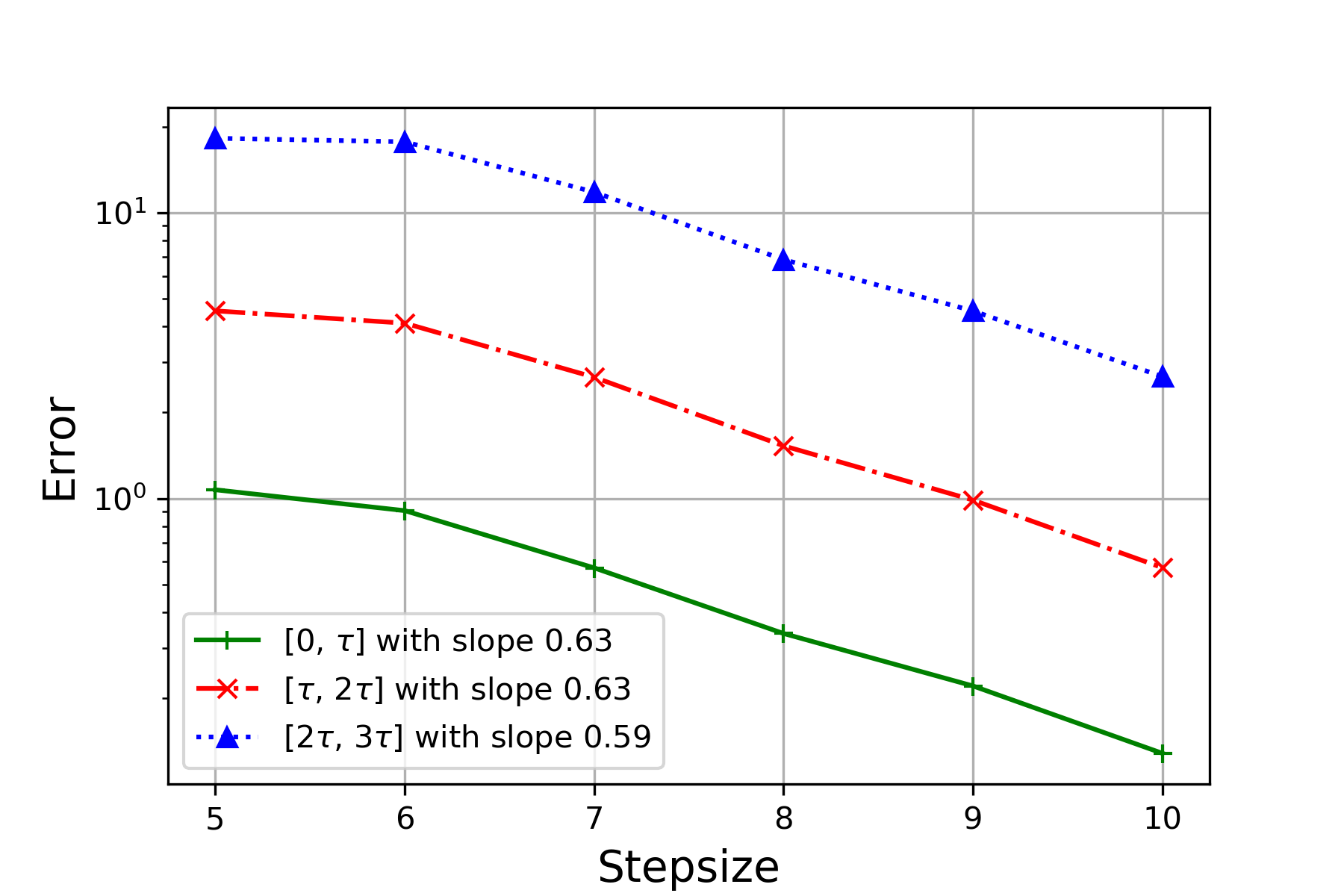}
    \caption{$\alpha=1$.\label{fig:alpha1,gamma3,g1}}  
\end{subfigure}
\caption{Mean square errors slope for $\gamma=3$ and values of $\alpha=0.1,0.5,1$ for \eqref{eq:f1} plus \eqref{eq:g1}. \label{fig:mse_g1_gamma3}}
\end{figure}

\begin{figure}
\centering
\begin{subfigure}{0.32\textwidth}
    \centering
    \includegraphics[width=\textwidth]{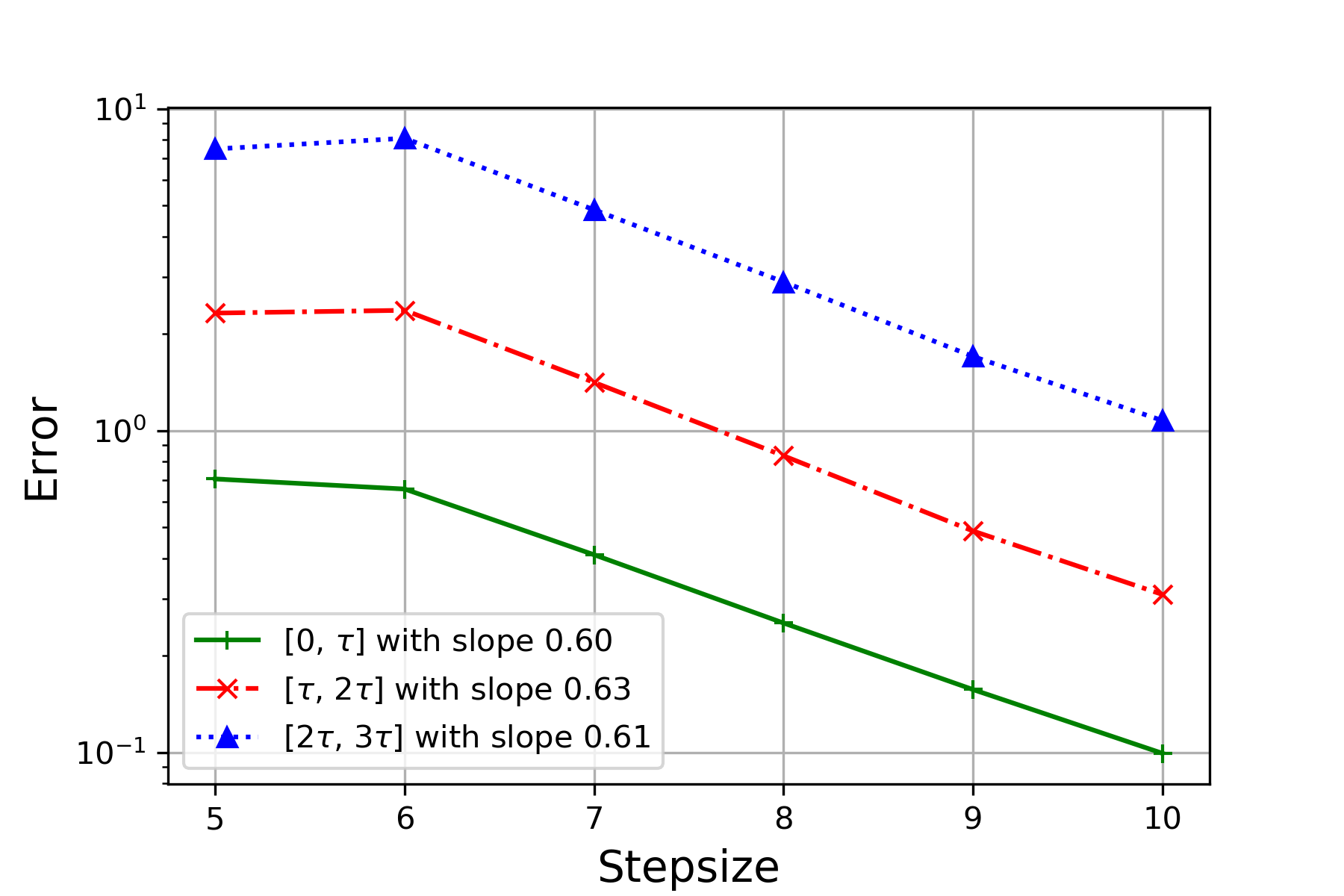}
    \caption{$\alpha_1=0.1$.}
    \label{fig:alpha0.1,gamma5,g1}
\end{subfigure}
\begin{subfigure}{0.32\textwidth}
    \centering
    \includegraphics[width=\textwidth]{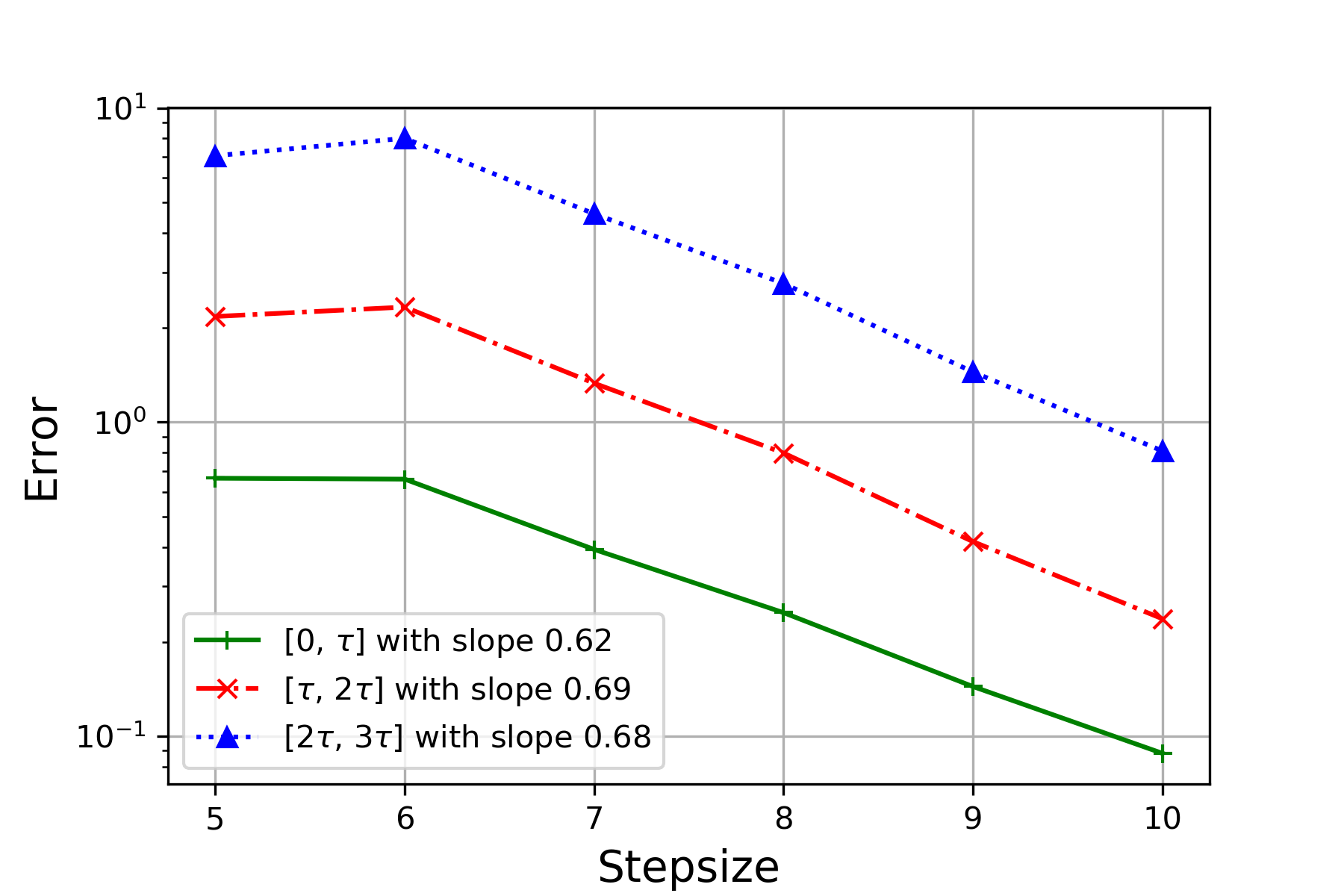}
    \caption{$\alpha_1=0.5$.}
    \label{fig:alpha0.5,gamma5,g1}
\end{subfigure}
\begin{subfigure}{0.32\textwidth}
    \centering
    \includegraphics[width=\textwidth]{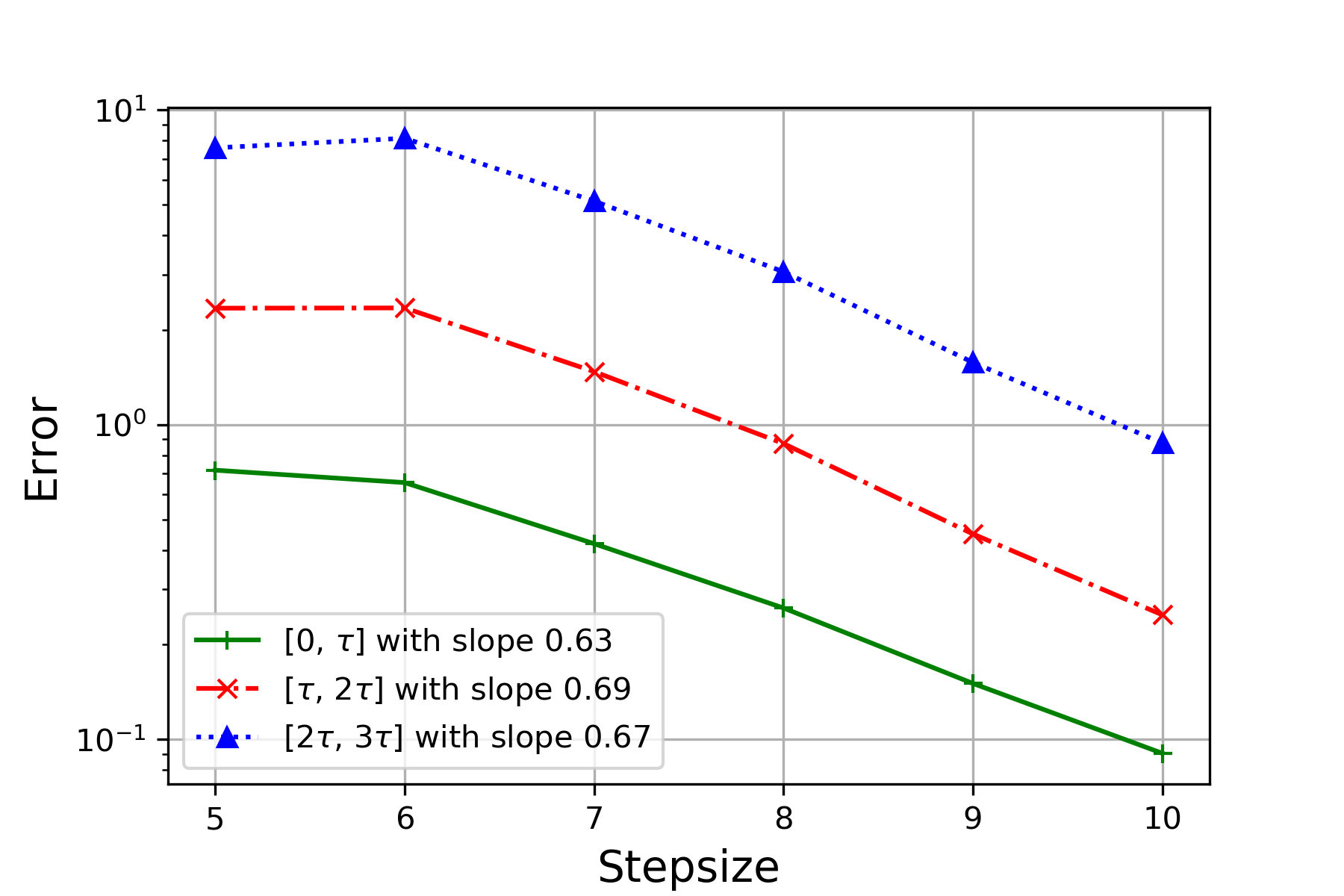}
    \caption{$\alpha_1=1$.}
    \label{fig:alpha1,gamma5,g1}
\end{subfigure}
\caption{Mean square errors slope for $\gamma_1=5$ and values of $\alpha_1=0.1,0.5,1$ for \eqref{eq:f1} plus \eqref{eq:g1}.}
\label{fig:mse_g1_gamma5}
\end{figure}
All the figures show that the error of $[j\tau,(j+1)\tau]$ is worse that the error of $[(j-1)\tau,j\tau]$ for $j\in \{1,2\}$ though the changes in the order of convergence from  $[(j-1)\tau,j\tau]$ to $[j\tau,(j+1)\tau]$ are not significant. 
\end{exm}
\begin{exm} [Multiplicative noise]
\normalfont
In the following numerical tests we use \eqref{eq:f1} with \eqref{eqn:k1} and \eqref{eq:f2}. We fix the number of experiments $K=1000$ for each $N=2^l$, $l=5,\ldots,10$, and the reference solution is computed with stepsize $2^{-17}$; also, the horizon parameter is $n=3$. We fix $\gamma_1=5$ and vary $\gamma_2$, $\alpha_1$ and $\alpha_2$. 

We get the following results for $\gamma_1=5$ and $\gamma_2=0.1$: \\
letting $\alpha_1=0.1$ and $\alpha_2=0.1$, the negative mean square error slopes are
$0.23$, $0.13$, and $0.11$. See Figure \ref{fig:alpha0.10.1,gamma51,f2}; \\
letting $\alpha_1=0.1$ and $\alpha_2=1$, the negative mean square error slopes are
$0.23$, $0.15$, and $0.14$. See Figure \ref{fig:alpha0.11,gamma51,f2}; \\
letting $\alpha_1=1$ and $\alpha_2=0.1$, the negative mean square error slopes are
$0.23$, $0.14$, and $0.11$. See Figure \ref{fig:alpha10.1,gamma51,f2}; \\
letting $\alpha_1=0.5$ and $\alpha_2=0.5$, the negative mean square error slopes are
$0.30$, $0.18$, and $0.22$. See Figure \ref{fig:alpha0.50.5,gamma51,f2}; \\
while, for $\gamma_1=5$ and $\gamma_2=0.5$: \\
letting $\alpha_1=0.1$ and $\alpha_2=0.1$, the negative mean square error slopes are $0.31$, $0.15$, and $0.17$. See Figure \ref{fig:alpha0.11,gamma50.5,f2}; \\
letting $\alpha_1=1$ and $\alpha_2=0.1$, the negative mean square error slopes are $0.31$, $0.15$, and $0.17$. See Figure \ref{fig:alpha10.1,gamma50.5,f2}; \\
letting $\alpha_1=0.1$ and $\alpha_2=1$, the negative mean square error slopes are $0.31$, $0.16$, and $0.23$. See Figure \ref{fig:alpha0.11,gamma50.5,f2}; \\
letting $\alpha_1=0.5$ and $\alpha_2=0.5$, the negative mean square error slopes are $0.31$, $0.18$, and $0.25$. See Figure \ref{fig:alpha0.50.5,gamma50.5,f2}; \\
while, for $\gamma_1=5$ and $\gamma_2=1$: \\
letting $\alpha_1=0.1$ and $\alpha_2=0.1$, the negative mean square error slopes are $0.31$, $0.28$, and $0.25$. See Figure \ref{fig:alpha0.11,gamma50.1,f2}; \\
letting $\alpha_1=1$ and $\alpha_2=0.1$, the negative mean square error slopes are $0.31$, $0.27$, and $0.24$. See Figure \ref{fig:alpha10.1,gamma50.1,f2}; \\
letting $\alpha_1=0.1$ and $\alpha_2=1$, the negative mean square error slopes are $0.31$, $0.25$, and $0.22$. See Figure \ref{fig:alpha0.11,gamma50.1,f2}; \\
letting $\alpha_1=0.5$ and $\alpha_2=0.5$, the negative mean square error slopes are $0.31$, $0.18$, and $0.25$. See Figure \ref{fig:alpha0.50.5,gamma50.1,f2}.
\begin{figure}
\centering
\begin{subfigure}{0.40\textwidth}
    \centering
    \includegraphics[width=\textwidth]{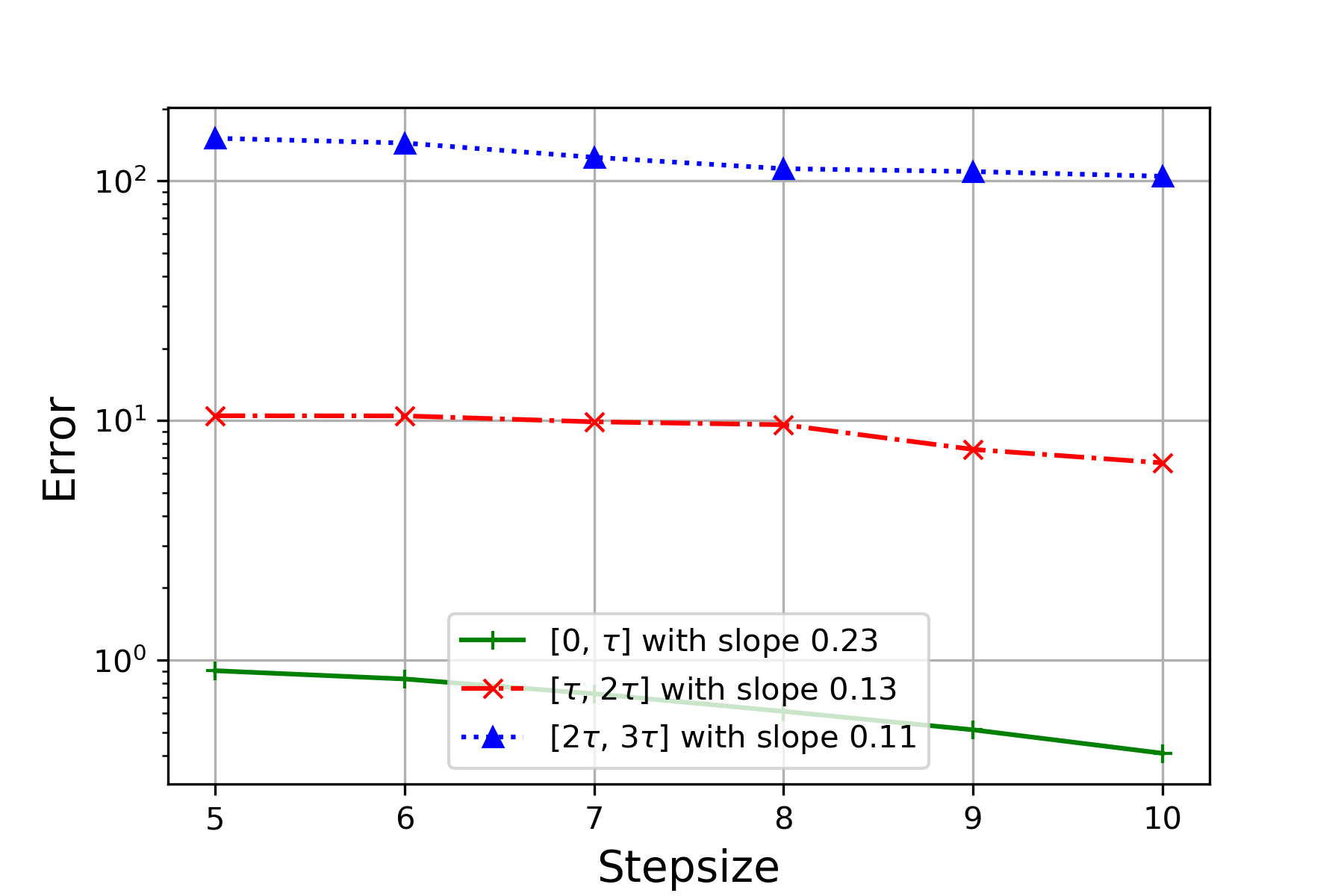}
    \caption{$\alpha_1=0.1$ and $\alpha_2=0.1$. \label{fig:alpha0.10.1,gamma51,f2}}
\end{subfigure}
\begin{subfigure}{0.40\textwidth}
    \centering
    \includegraphics[width=\textwidth]{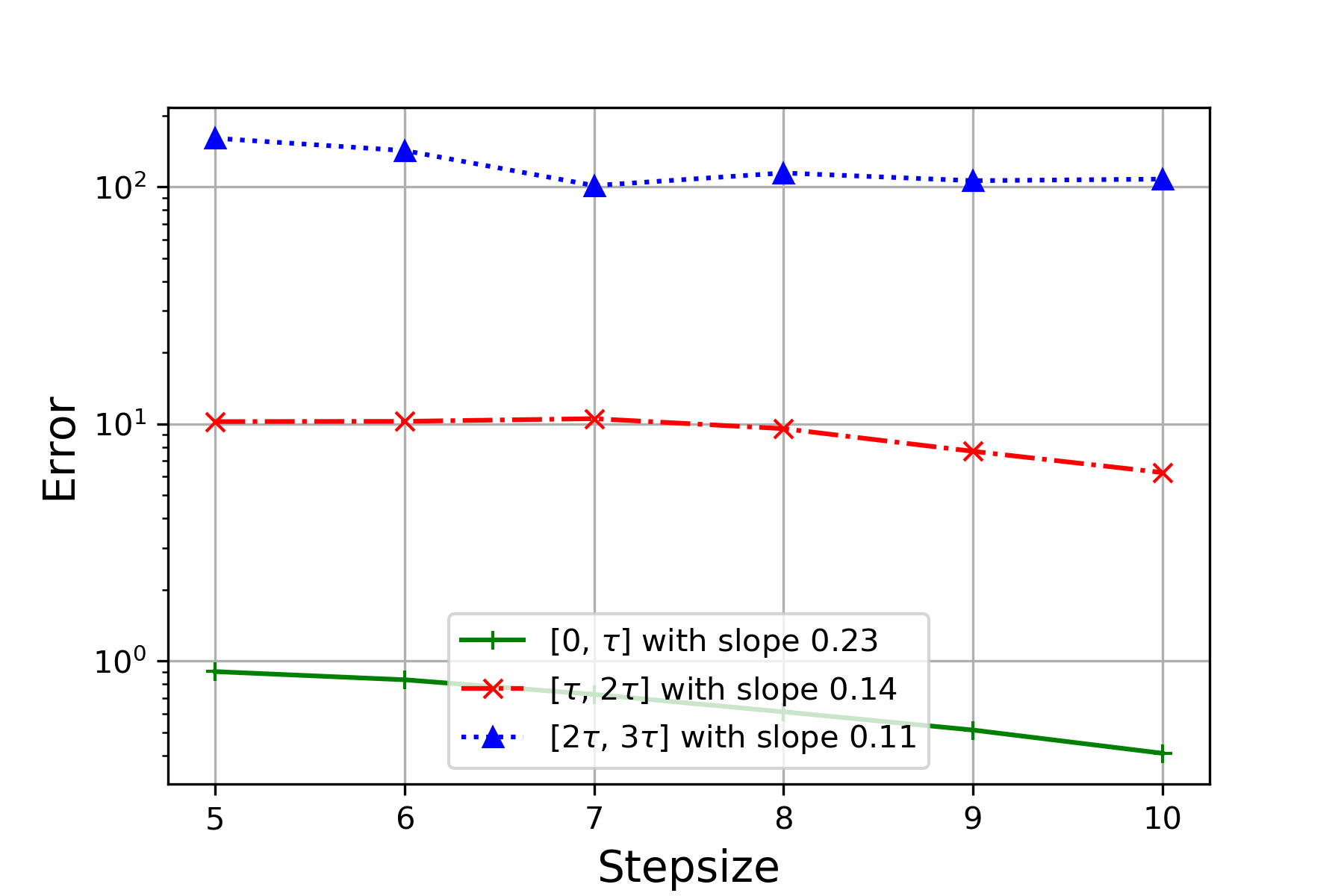}
    \caption{$\alpha_1=1$ and $\alpha_2=0.1$. \label{fig:alpha10.1,gamma51,f2}}
\end{subfigure}\\
\begin{subfigure}{0.4\textwidth}
    \centering
    \includegraphics[width=\textwidth]{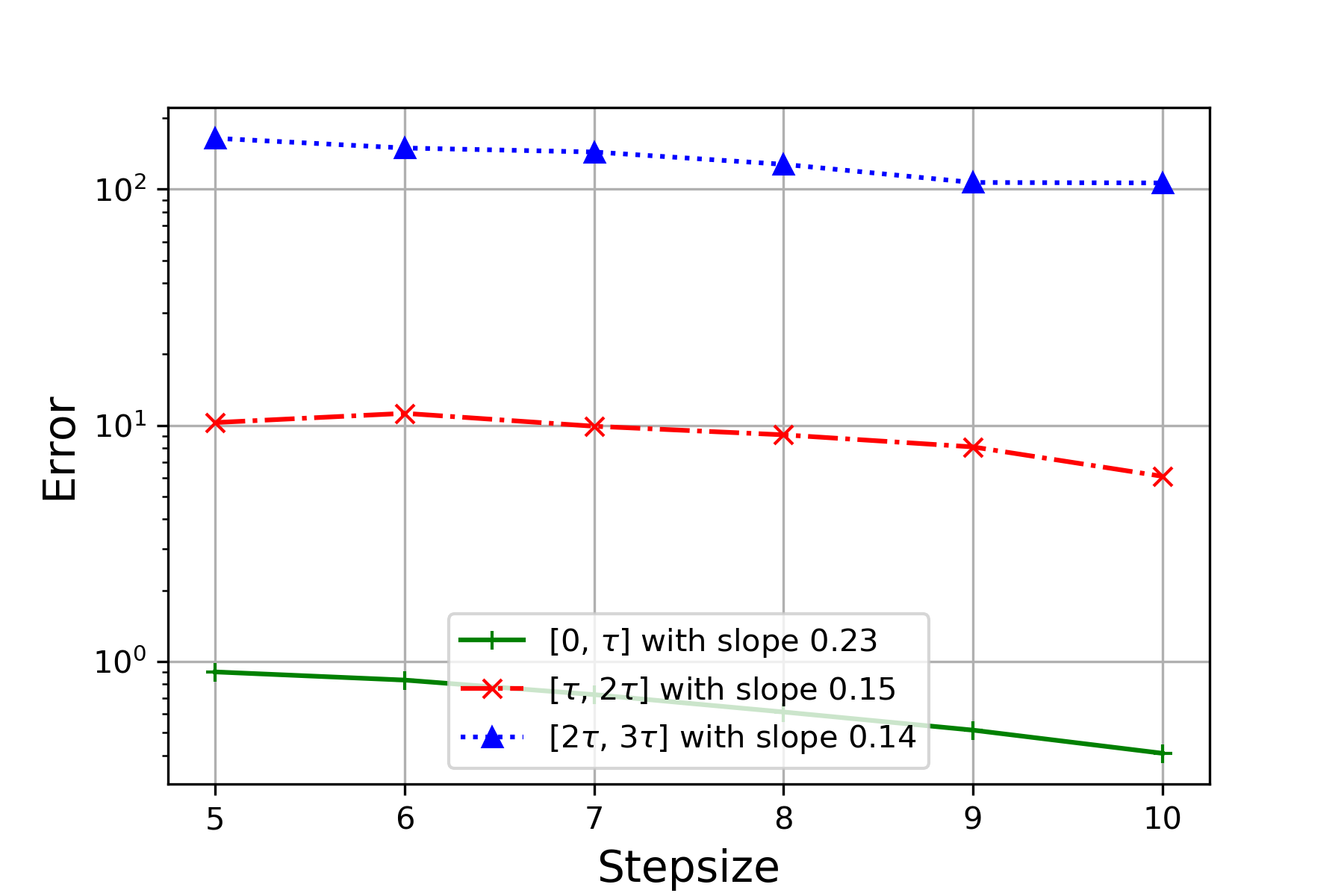}
    \caption{$\alpha_1=0.1$ and $\alpha_2=1$. \label{fig:alpha0.11,gamma51,f2}}  
\end{subfigure}
\begin{subfigure}{0.4\textwidth}
    \centering
    \includegraphics[width=\textwidth]{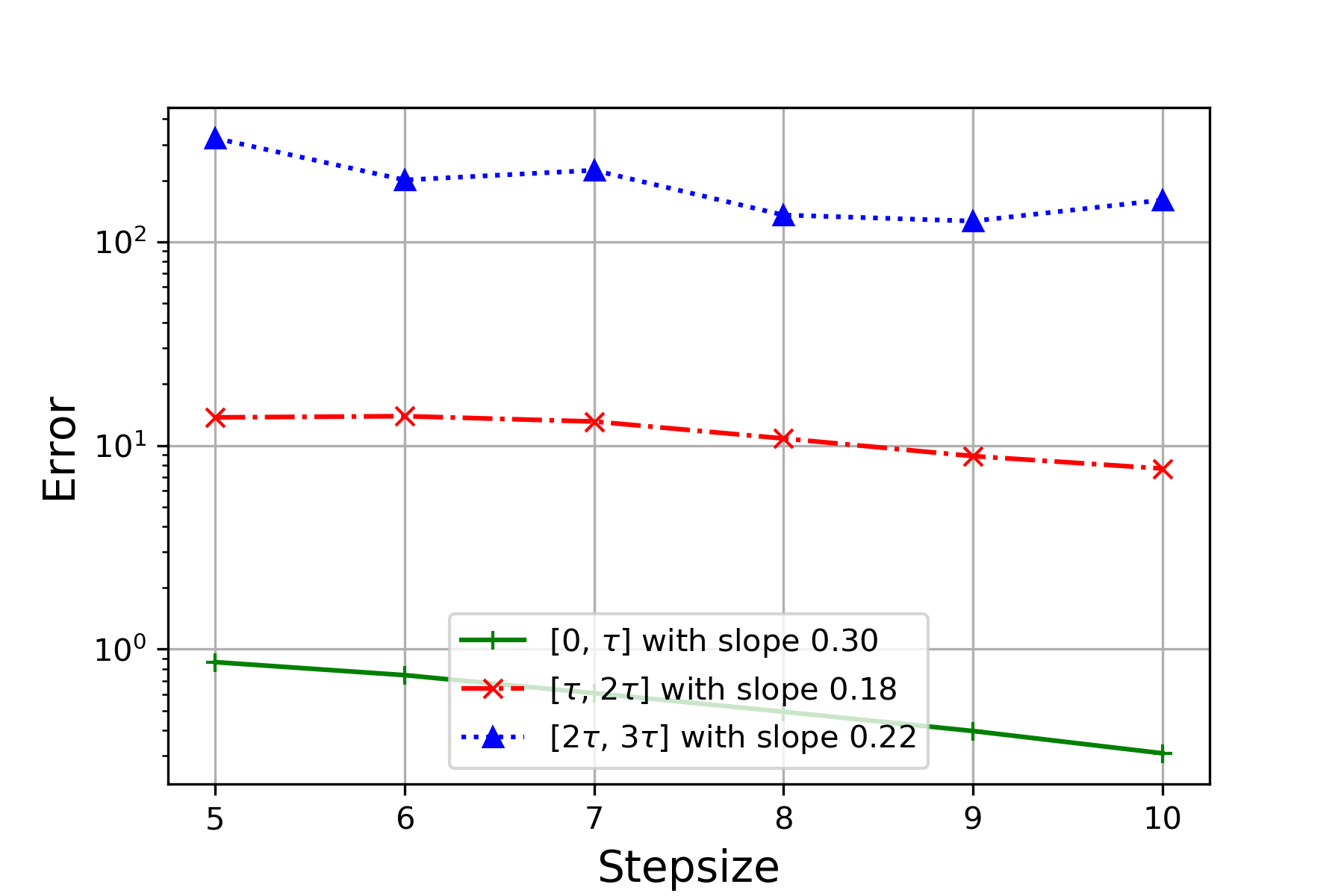}
    \caption{$\alpha_1=0.5$ and $\alpha_2=0.5$. \label{fig:alpha0.50.5,gamma51,f2}}  
\end{subfigure}
\caption{Mean square errors slope for $\gamma_1=5$ and $\gamma_2=0.1$ and values of $(\alpha_1,\alpha_2)=(0.1,0.1),(1,0.1),(0.1,1),(0.5,0.5)$ for \eqref{eq:f1} plus \eqref{eq:f2}. \label{fig:mse_g1f1_gamma51}}
\end{figure}

\begin{figure}
\centering
\begin{subfigure}{0.40\textwidth}
    \centering
    \includegraphics[width=\textwidth]{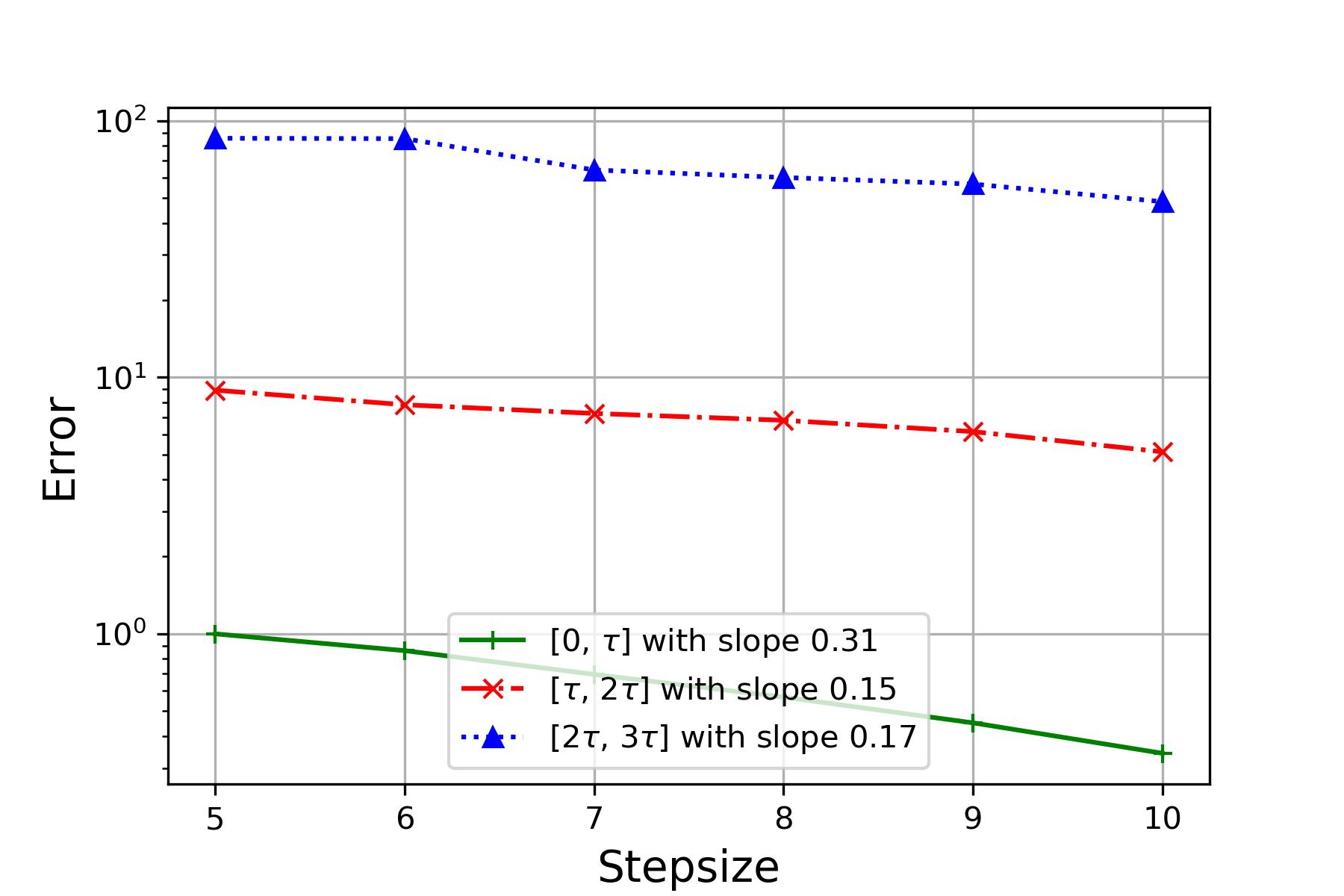}
    \caption{$\alpha_1=0.1$ and $\alpha_2=0.1$. \label{fig:alpha0.10.1,gamma50.5,f2}}
\end{subfigure}
\begin{subfigure}{0.40\textwidth}
    \centering
    \includegraphics[width=\textwidth]{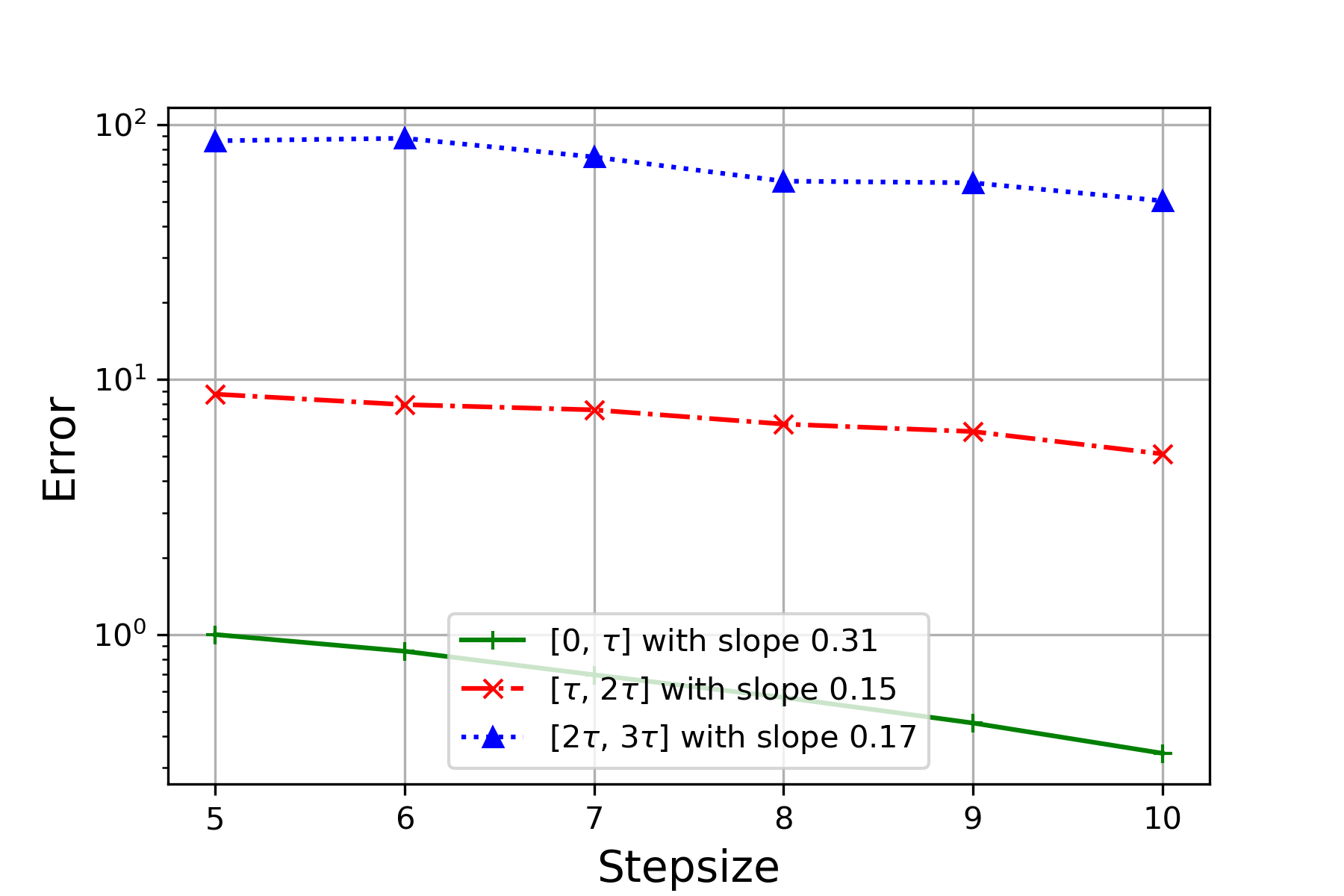}
    \caption{$\alpha_1=1$ and $\alpha_2=0.1$. \label{fig:alpha10.1,gamma50.5,f2}}
\end{subfigure}\\
\begin{subfigure}{0.4\textwidth}
    \centering
    \includegraphics[width=\textwidth]{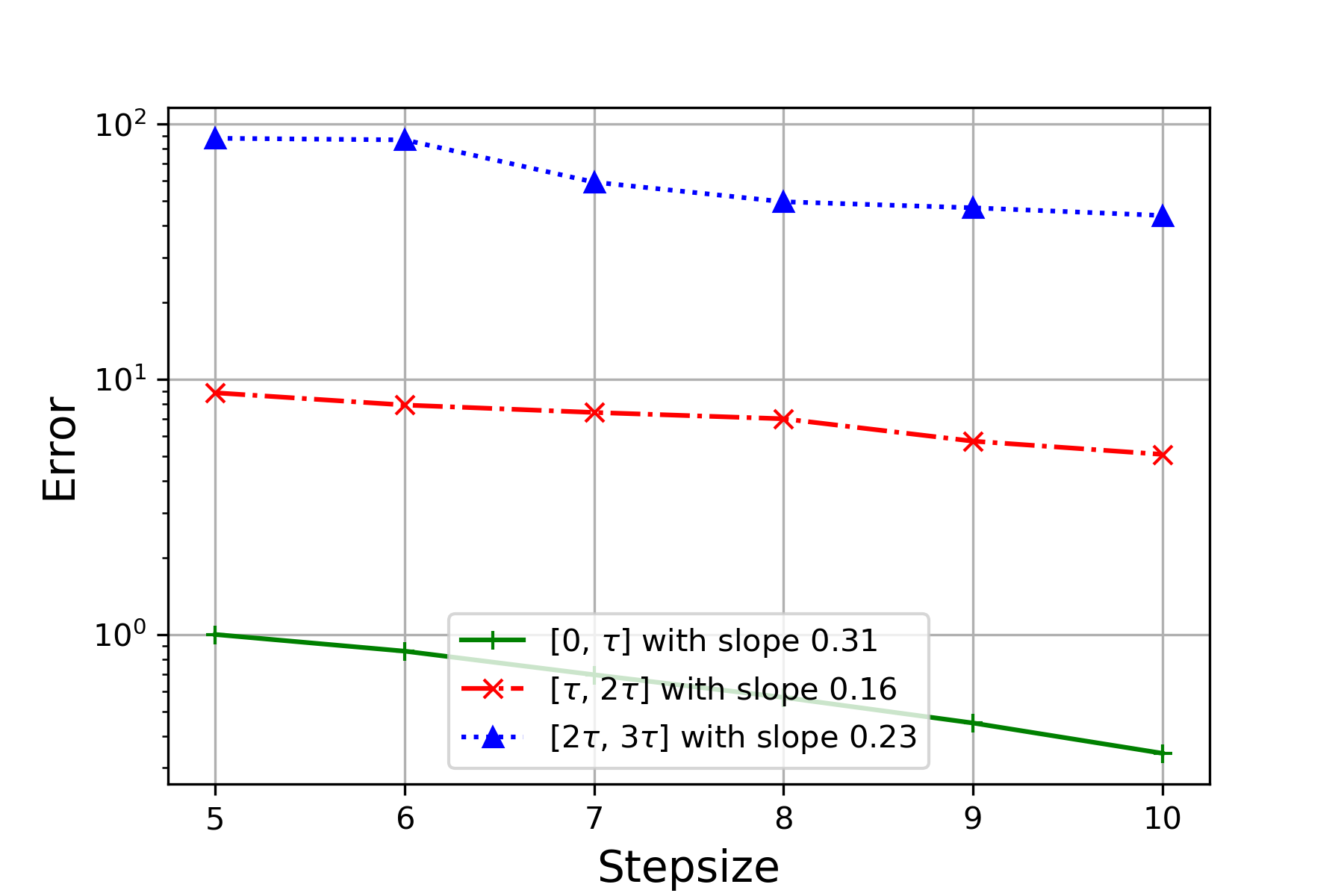}
    \caption{$\alpha_1=0.1$ and $\alpha_2=1$. \label{fig:alpha0.11,gamma50.5,f2}}  
\end{subfigure}
\begin{subfigure}{0.4\textwidth}
    \centering
    \includegraphics[width=\textwidth]{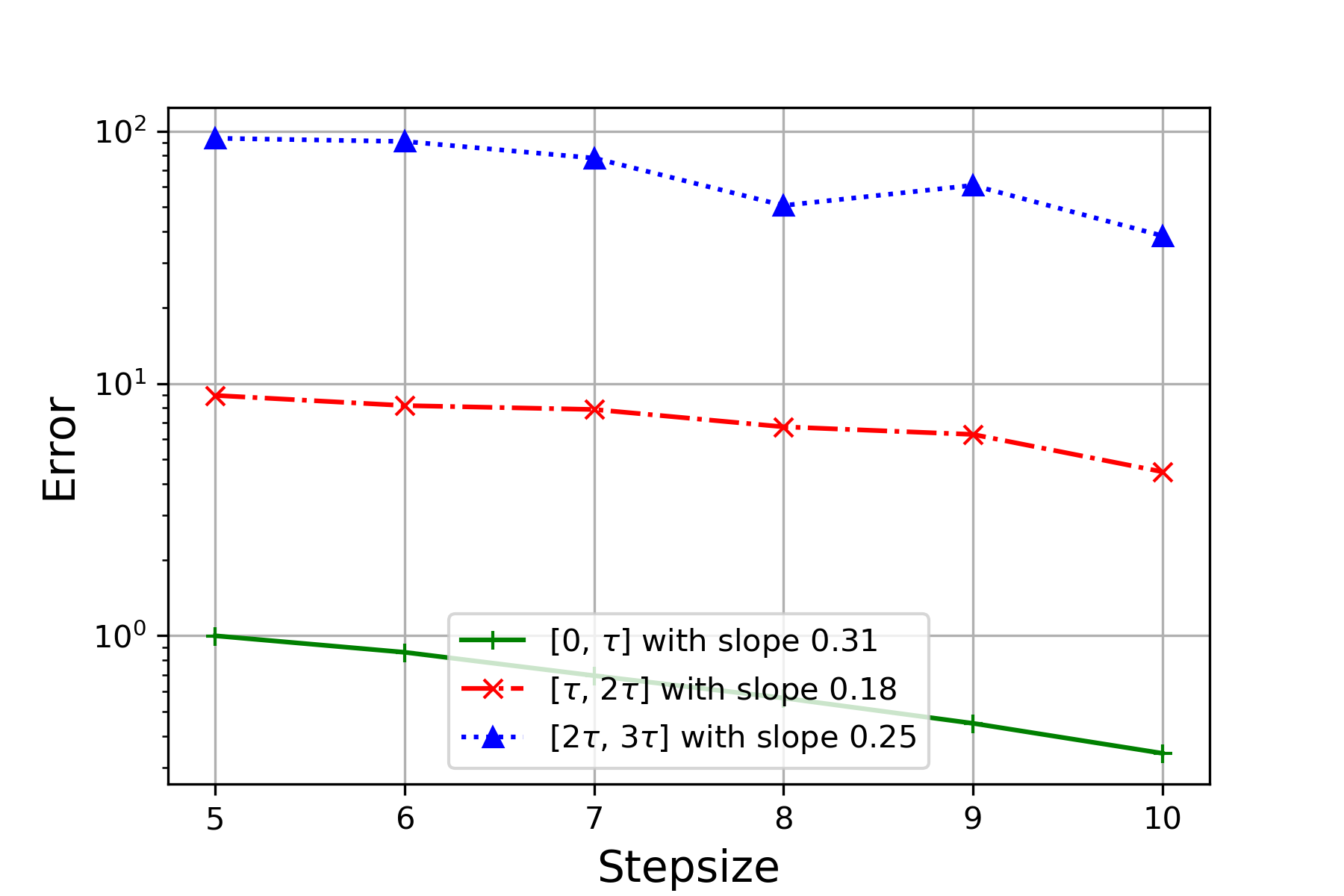}
    \caption{$\alpha_1=0.5$ and $\alpha_2=0.5$. \label{fig:alpha0.50.5,gamma50.5,f2}}  
\end{subfigure}
\caption{Mean square errors slope for $\gamma_1=5$ and $\gamma_2=0.5$ and values of $(\alpha_1,\alpha_2)=(0.1,0.1),(1,0.1),(0.1,1),(0.5,0.5)$ for \eqref{eq:f1} plus \eqref{eq:f2}. \label{fig:mse_g1f1_gamma50.5}}
\end{figure}

\begin{figure}
\centering
\begin{subfigure}{0.40\textwidth}
    \centering
    \includegraphics[width=\textwidth]{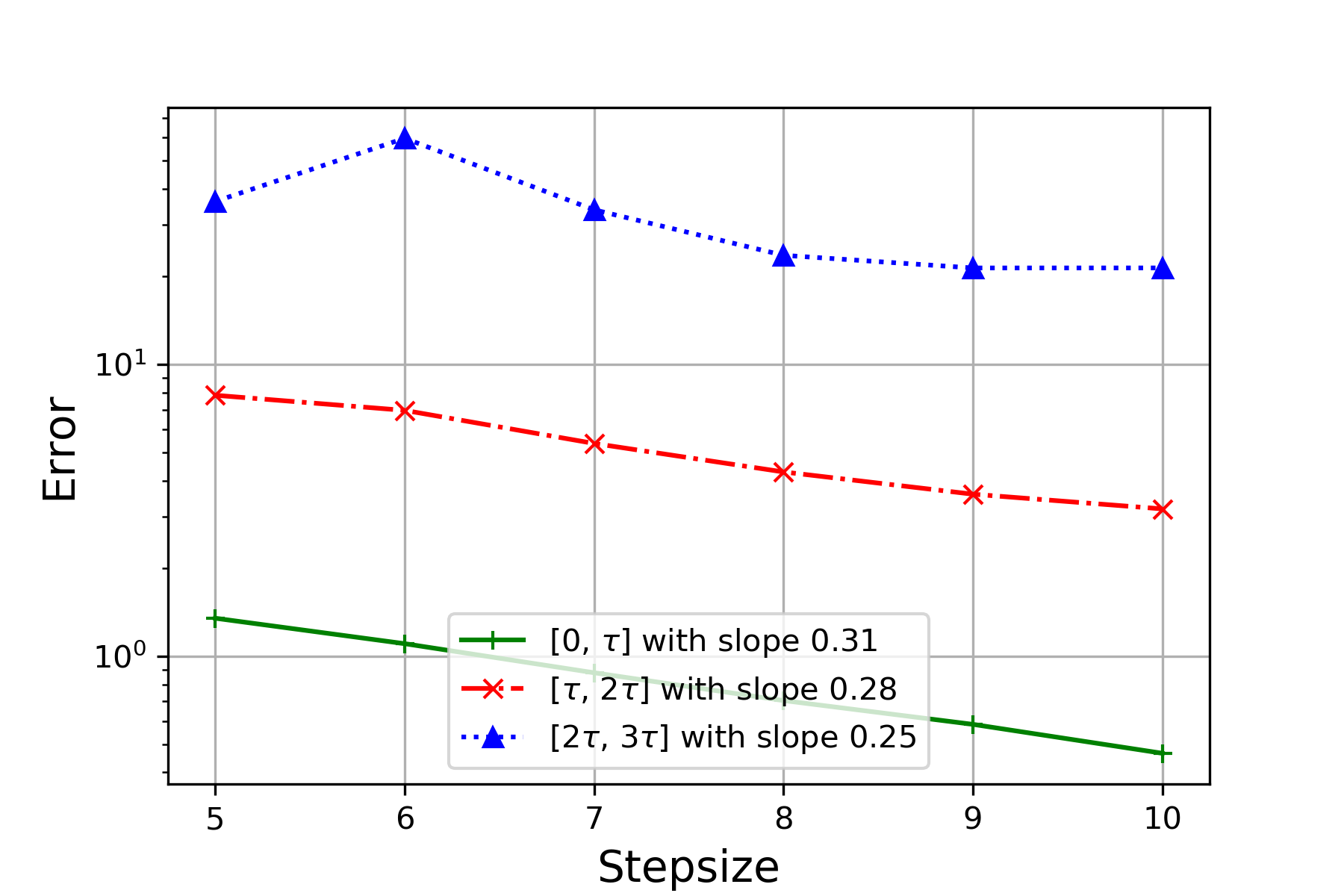}
    \caption{$\alpha_1=0.1$ and $\alpha_2=0.1$. \label{fig:alpha0.10.1,gamma50.1,f2}}
\end{subfigure}
\begin{subfigure}{0.40\textwidth}
    \centering
    \includegraphics[width=\textwidth]{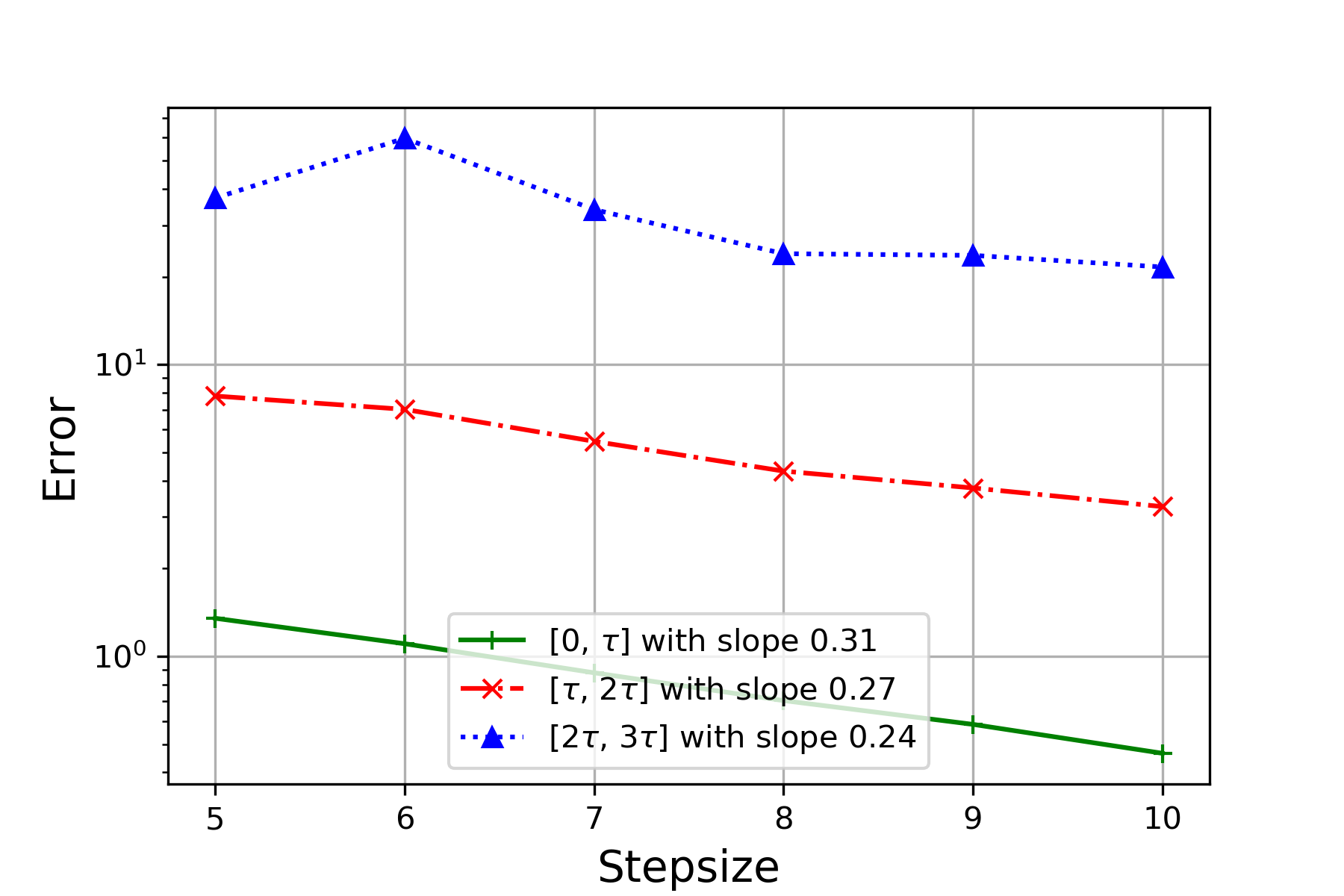}
    \caption{$\alpha_1=1$ and $\alpha_2=0.1$. \label{fig:alpha10.1,gamma50.1,f2}}
\end{subfigure}\\
\begin{subfigure}{0.4\textwidth}
    \centering
    \includegraphics[width=\textwidth]{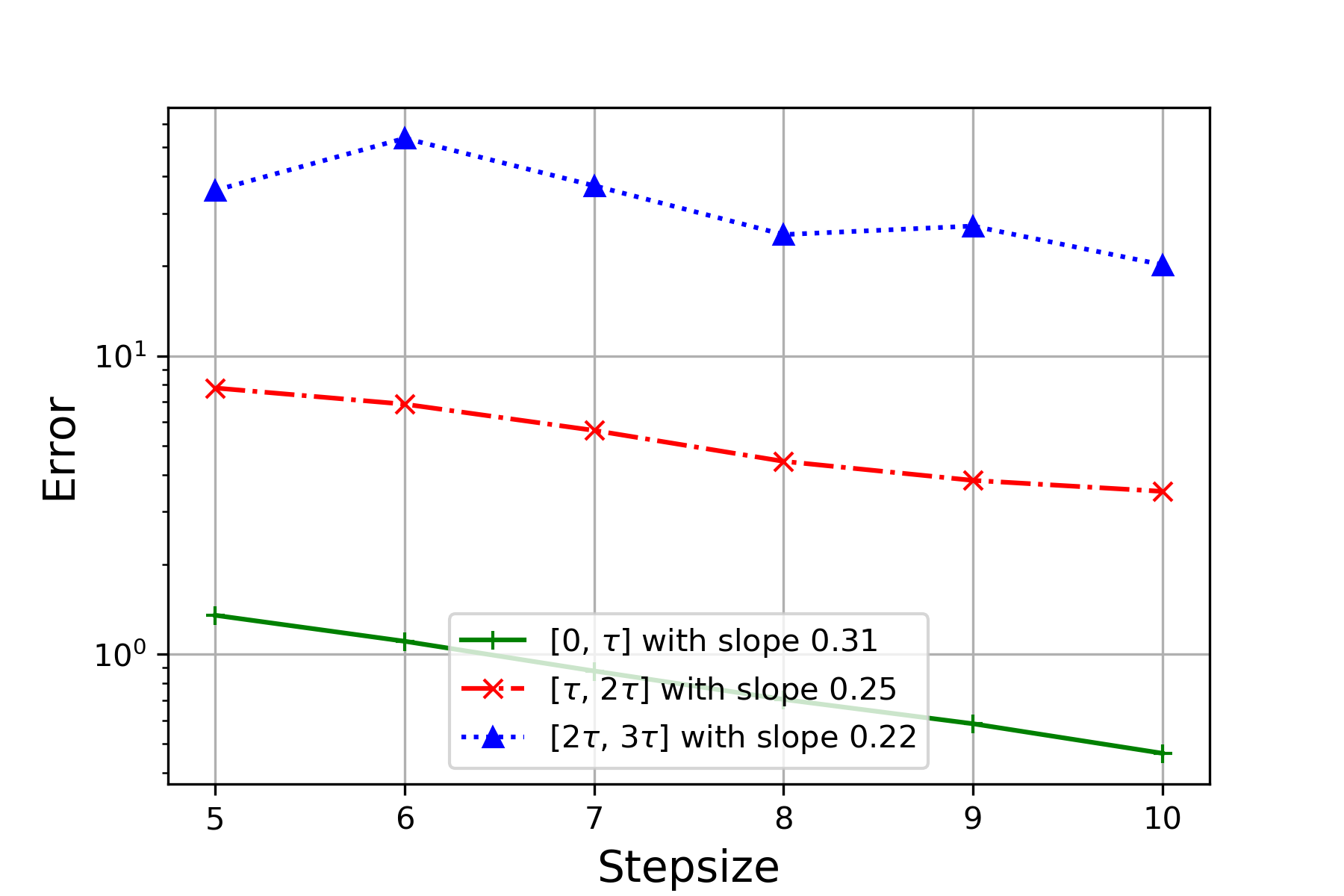}
    \caption{$\alpha_1=0.1$ and $\alpha_2=1$. \label{fig:alpha0.11,gamma50.1,f2}}  
\end{subfigure}
\begin{subfigure}{0.4\textwidth}
    \centering
    \includegraphics[width=\textwidth]{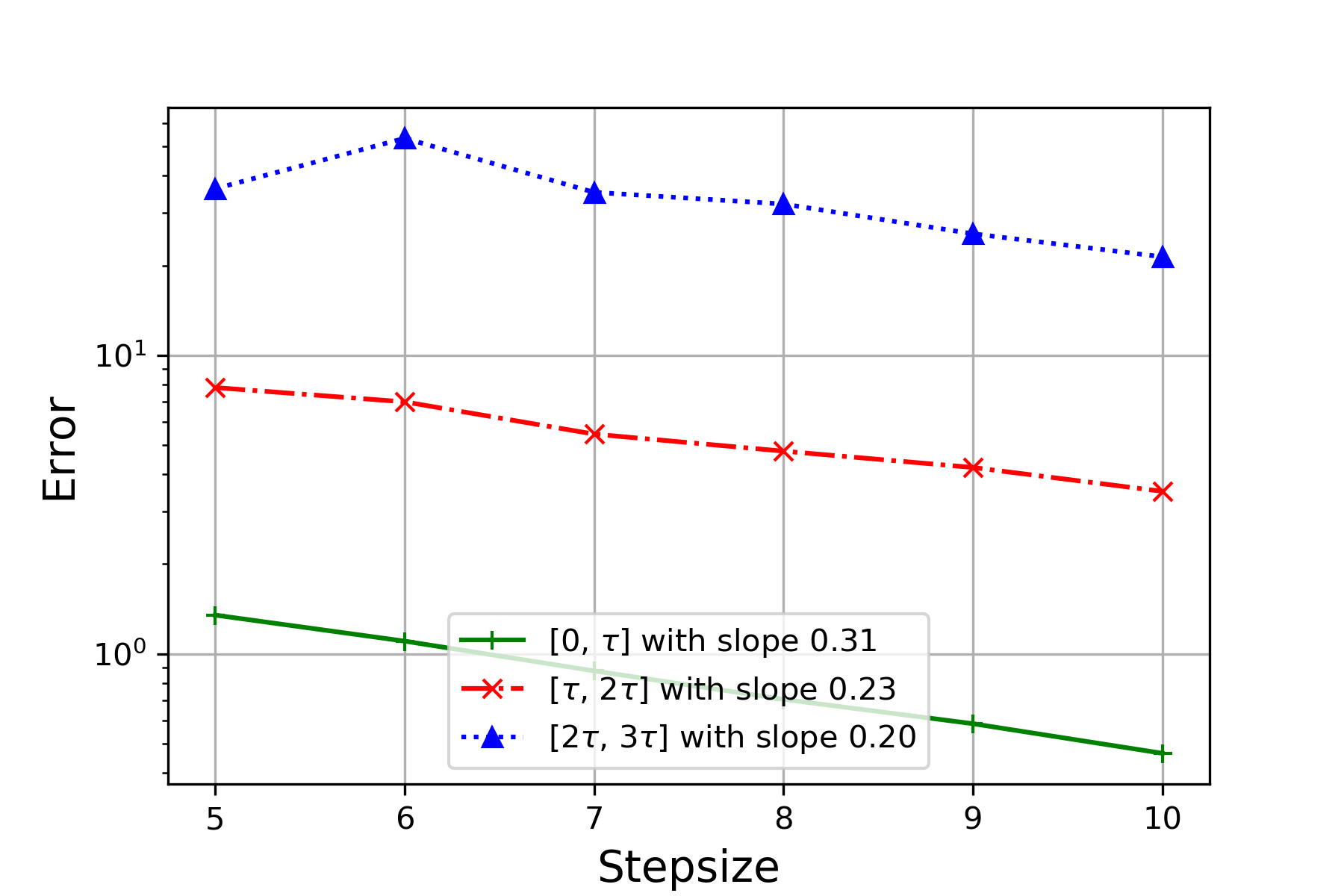}
    \caption{$\alpha_1=0.5$ and $\alpha_2=0.5$. \label{fig:alpha0.50.5,gamma50.1,f2}}  
\end{subfigure}
\caption{Mean square errors slope for $\gamma_1=5$ and $\gamma_2=1$ and values of $(\alpha_1,\alpha_2)=(0.1,0.1),(1,0.1),(0.1,1),(0.5,0.5)$ for \eqref{eq:f1} plus \eqref{eq:f2}. \label{fig:mse_g1f1_gamma50.1}}
\end{figure}
It can be observed that for each fixed $(\alpha_1,\alpha_2)$ pair and for each fixed $[j\tau, (j+1)\tau]$ interval, the order of convergence increases with $\gamma_2$; for each fixed $\gamma_2$ and for each fixed $[j\tau, (j+1)\tau]$ interval, the negative slops for $(\alpha_1,\alpha_2)=(0.1,0.1),(1,0.1),(0.1,1)$ are almost the same, which are slightly less than the negative slop of $(\alpha_1,\alpha_2)=(0.5,0.5)$; for each fixed $(\alpha_1,\alpha_2)$ pair and for each fixed $\gamma_2$, the negative slope decreases with $j$. All these observations coincide with Theorem \ref{rate_of_conv_expl_Eul}.
\end{exm}
\begin{exm} [Multiplicative noise]
\normalfont
In the following numerical tests we use \eqref{eq:f1} with \eqref{eqn:k2} and \eqref{eq:f2}. We fix the number of experiments $K=1000$ for each $N=2^l$, $l=5,\ldots,10$, and the reference solution is computed with stepsize $2^{-17}$; also, the horizon parameter is $n=3$. We vary $\gamma_2$, $\alpha_1$, but allow the same value of $\alpha_2$ as $\alpha_1$, ie, $\alpha_1=\alpha_2=\alpha$. 

We get the following results for $\gamma_2=0.1$: \\
letting $\alpha=0.1$, the negative mean square error slopes are
$0.34$, $0.31$, and $0.25$. See Figure \ref{fig:2alpha0.10.1,gamma51,f2}; \\
letting $\alpha=0.5$, the negative mean square error slopes are
$0.35$, $0.25$, and $0.23$. See Figure \ref{fig:2alpha0.50.5,gamma51,f2}; \\
letting $\alpha=1$, the negative mean square error slopes are
$0.35$, $0.23$, and $0.23$. See Figure \ref{fig:2alpha11,gamma50.1,f2}; \\
while, for $\gamma_2=0.5$: \\
letting $\alpha=0.1$, the negative mean square error slopes are $0.30$, $0.27$, and $0.27$. See Figure \ref{fig:2alpha0.10.1,gamma50.5,f2}; \\
letting $\alpha=0.5$, the negative mean square error slopes are $0.31$, $0.27$, and $0.24$. See Figure \ref{fig:2alpha0.50.5,gamma50.5,f2}; \\
letting $\alpha=1$, the negative mean square error slopes are
$0.34$, $0.27$, and $0.26$. See Figure \ref{fig:2alpha11,gamma50.5,f2}; \\
while, for $\gamma_2=1$: \\
letting $\alpha=0.1$, the negative mean square error slopes are $0.29$, $0.30$, and $0.23$. See Figure \ref{fig:2alpha0.10.1,gamma50.1,f2}; \\
letting $\alpha=0.5$, the negative mean square error slopes are $0.32$, $0.27$, and $0.25$. See Figure \ref{fig:2alpha0.50.5,gamma50.1,f2};\\
letting $\alpha=1$, the negative mean square error slopes are
$0.35$, $0.30$, and $0.28$. See Figure \ref{fig:2alpha11,gamma51,f2}.
\begin{figure}
\centering
\begin{subfigure}{0.31\textwidth}
    \centering
    \includegraphics[width=\textwidth]{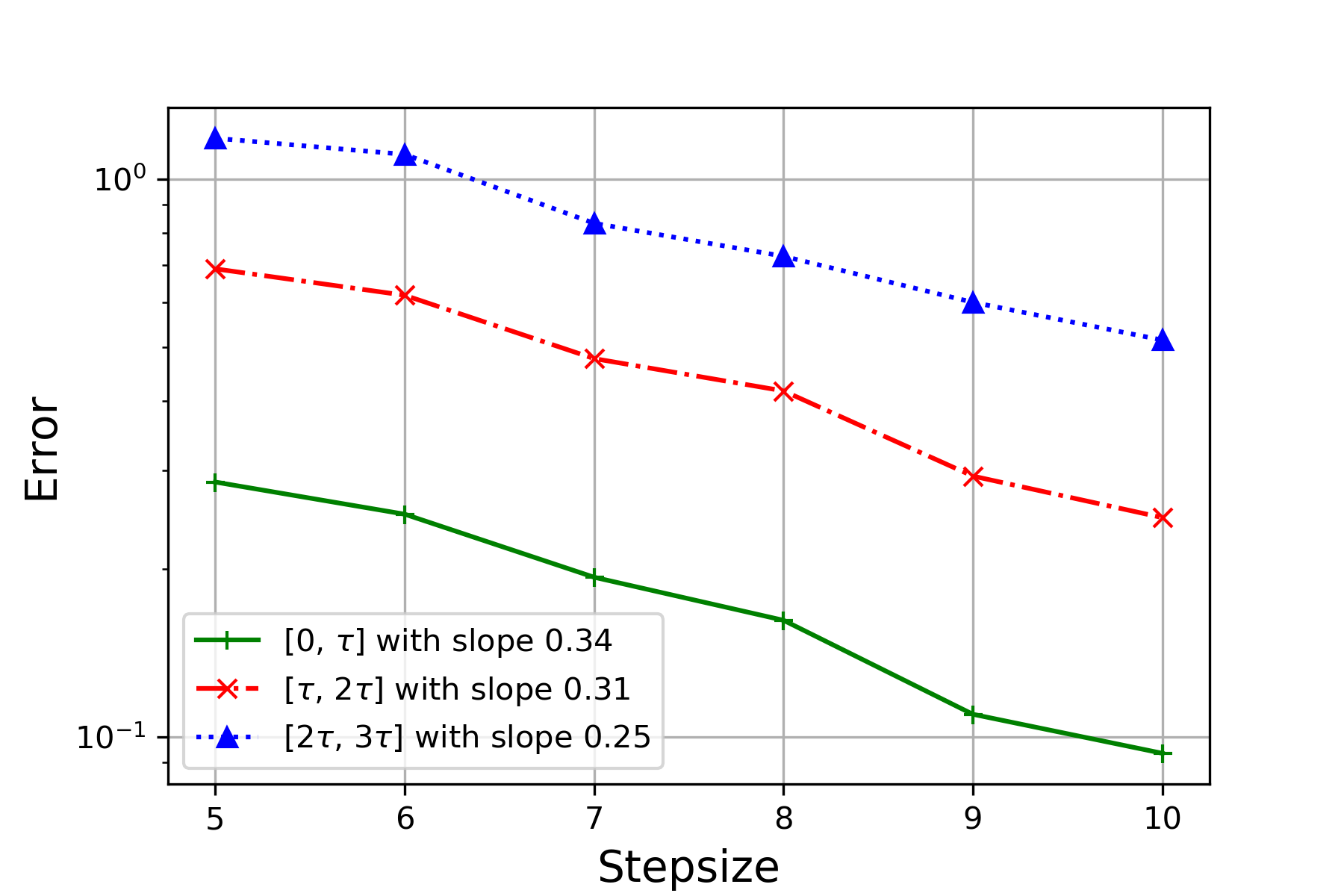}
    \caption{$\gamma_2=0.1$,  and $\alpha=0.1$. \label{fig:2alpha0.10.1,gamma51,f2}}
\end{subfigure}
\begin{subfigure}{0.31\textwidth}
    \centering
    \includegraphics[width=\textwidth]{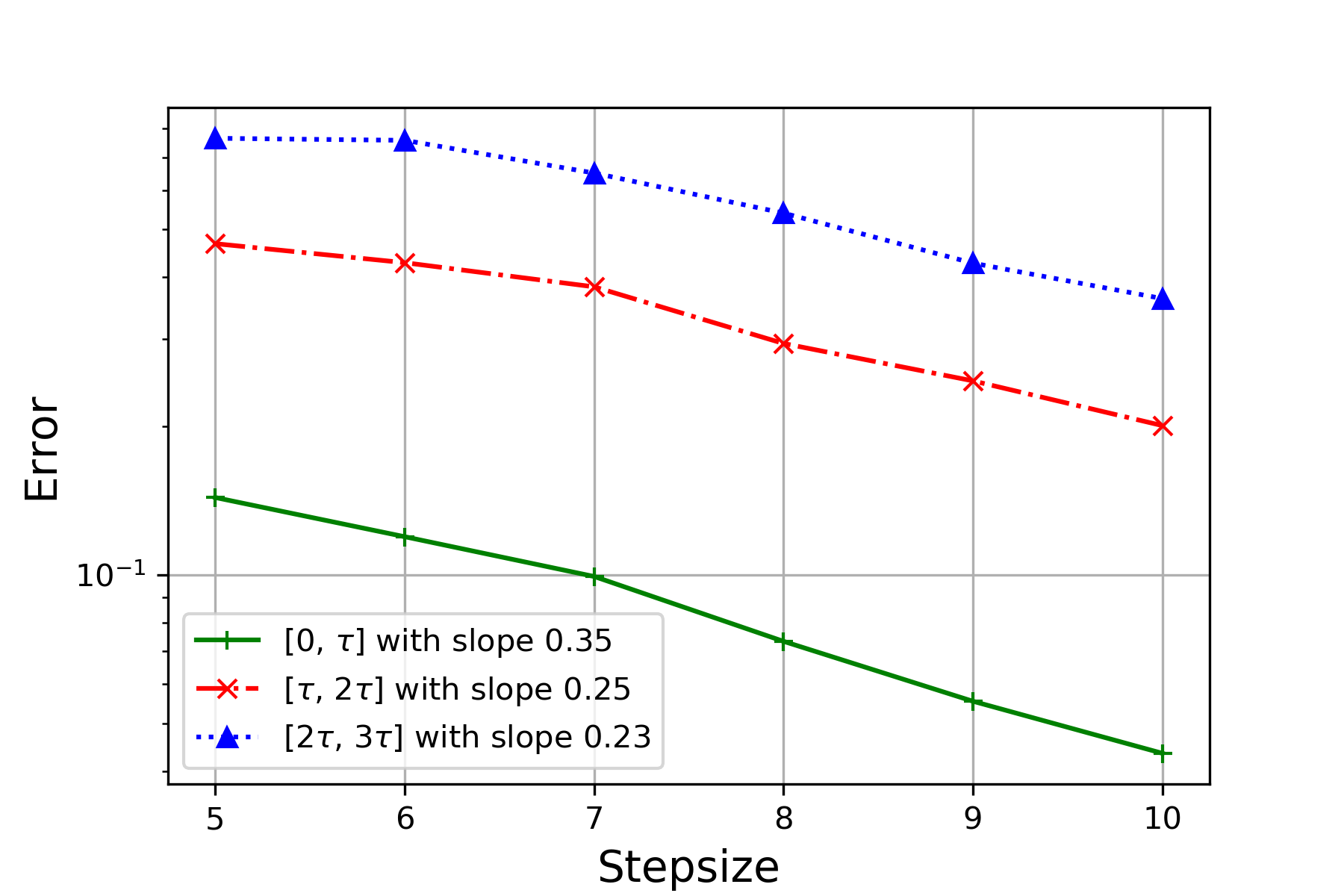}
    \caption{$\gamma_2=0.1$  and $\alpha=0.5$. \label{fig:2alpha0.50.5,gamma51,f2}}  
\end{subfigure} 
\begin{subfigure}{0.31\textwidth}
    \centering
    \includegraphics[width=\textwidth]{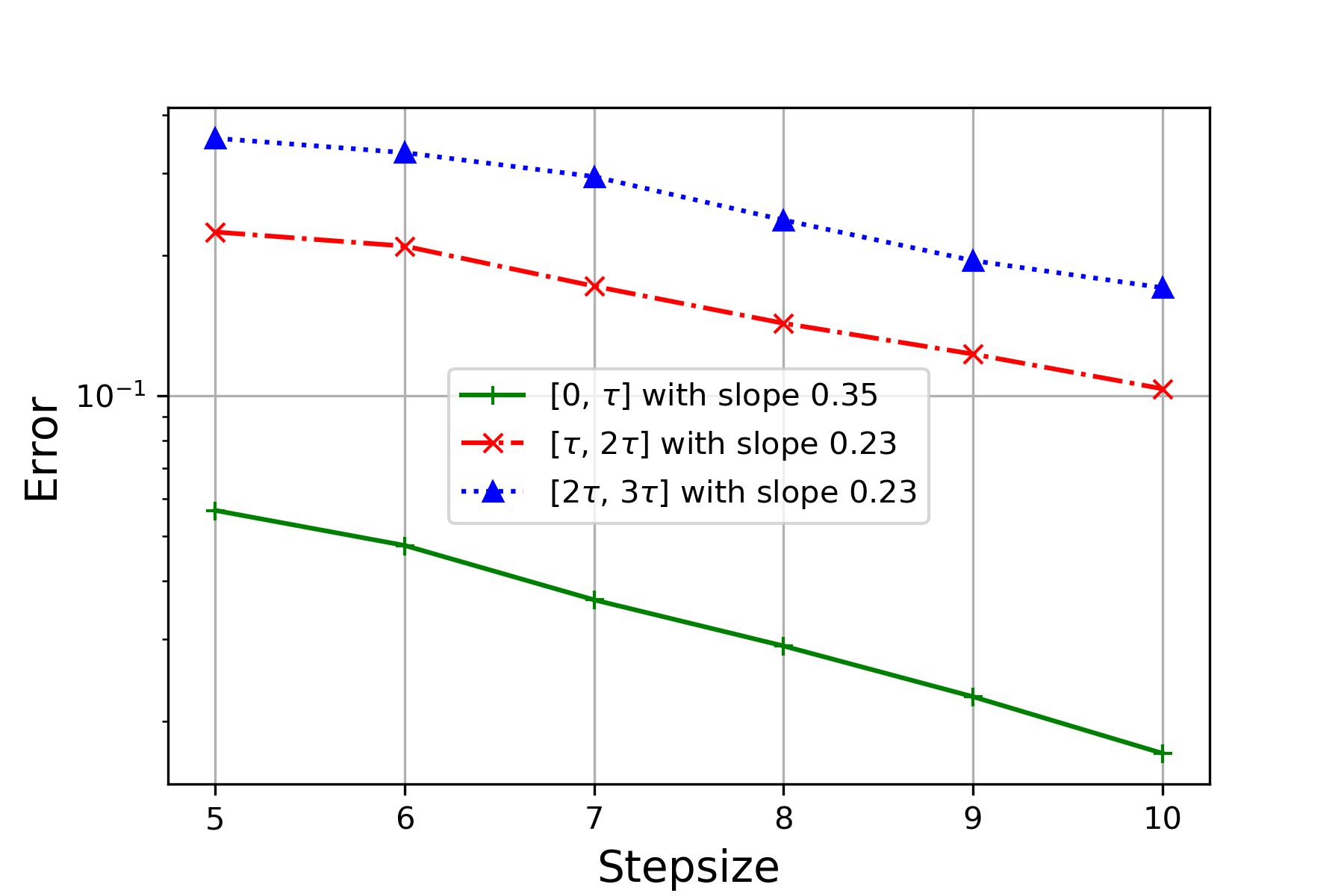}
    \caption{$\gamma_2=0.1$ and  $\alpha=1$. \label{fig:2alpha11,gamma50.1,f2}}  
\end{subfigure} \\
\begin{subfigure}{0.31\textwidth}
    \centering
    \includegraphics[width=\textwidth]{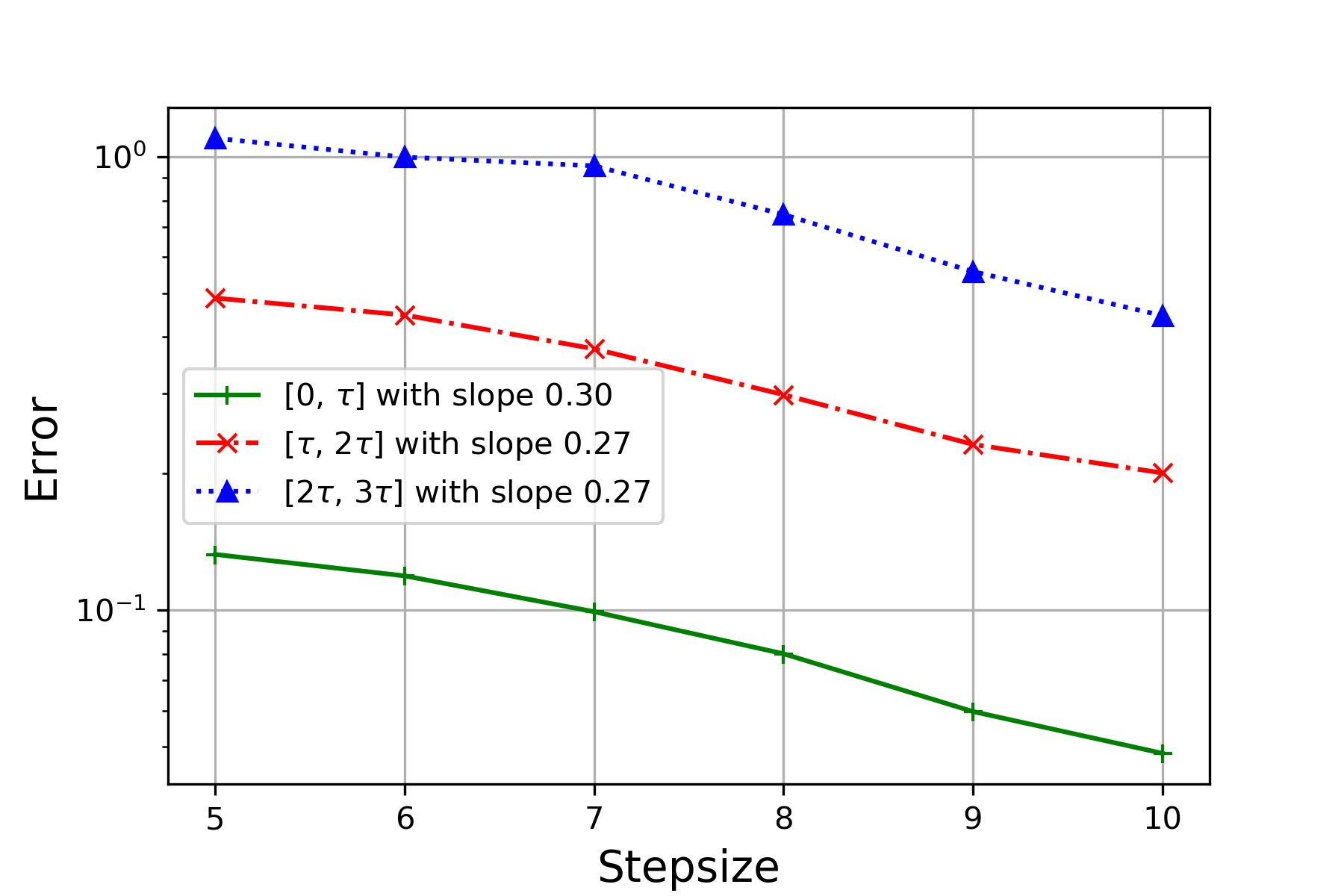}
    \caption{$\gamma_2=0.5$,  and $\alpha=0.1$. \label{fig:2alpha0.10.1,gamma50.5,f2}}
\end{subfigure} 
\begin{subfigure}{0.31\textwidth}
    \includegraphics[width=\textwidth]{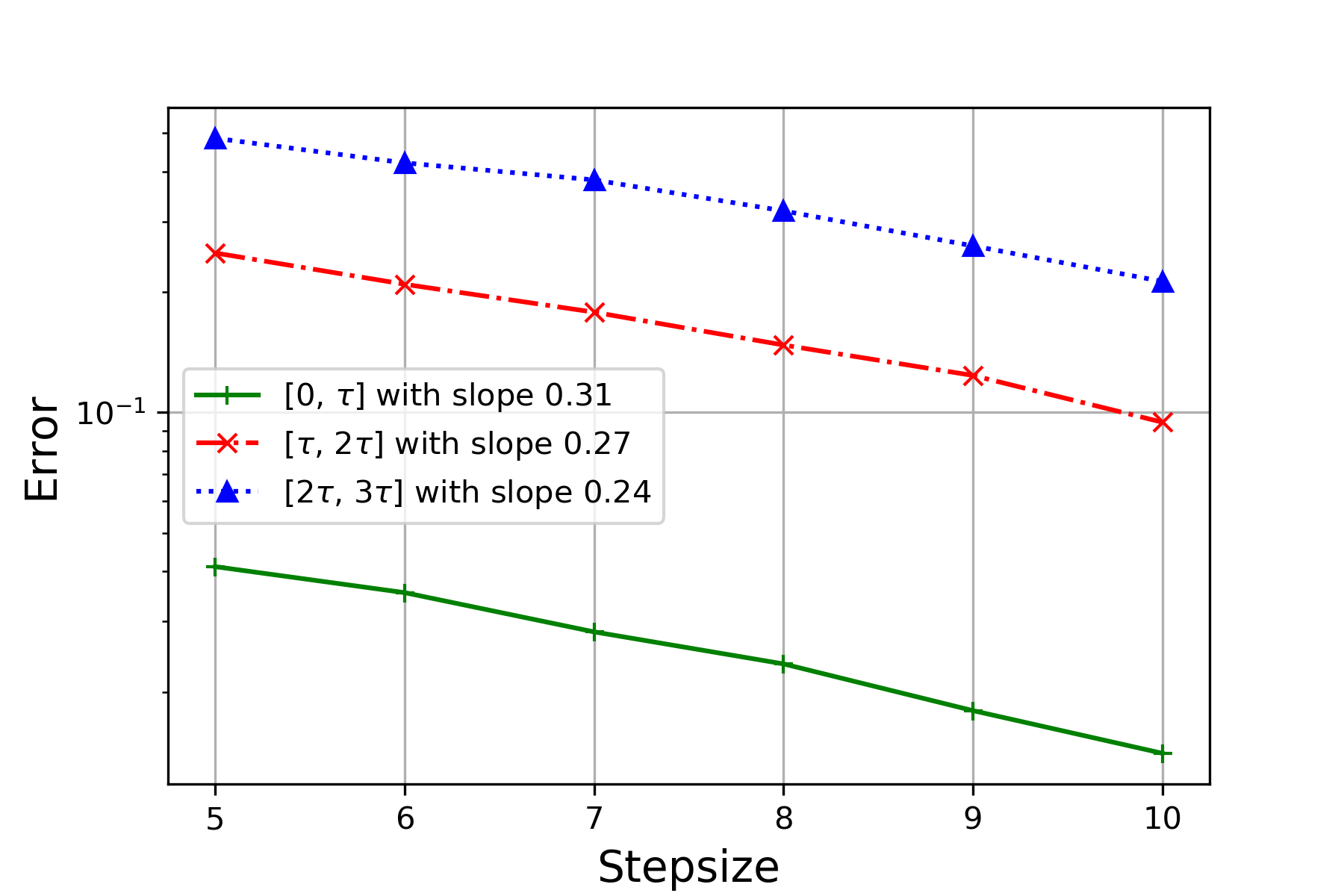}
    \caption{$\gamma_2=0.5$ and  $\alpha=0.5$. \label{fig:2alpha0.50.5,gamma50.5,f2}}  
\end{subfigure} 
\begin{subfigure}{0.31\textwidth}
    \centering
    \includegraphics[width=\textwidth]{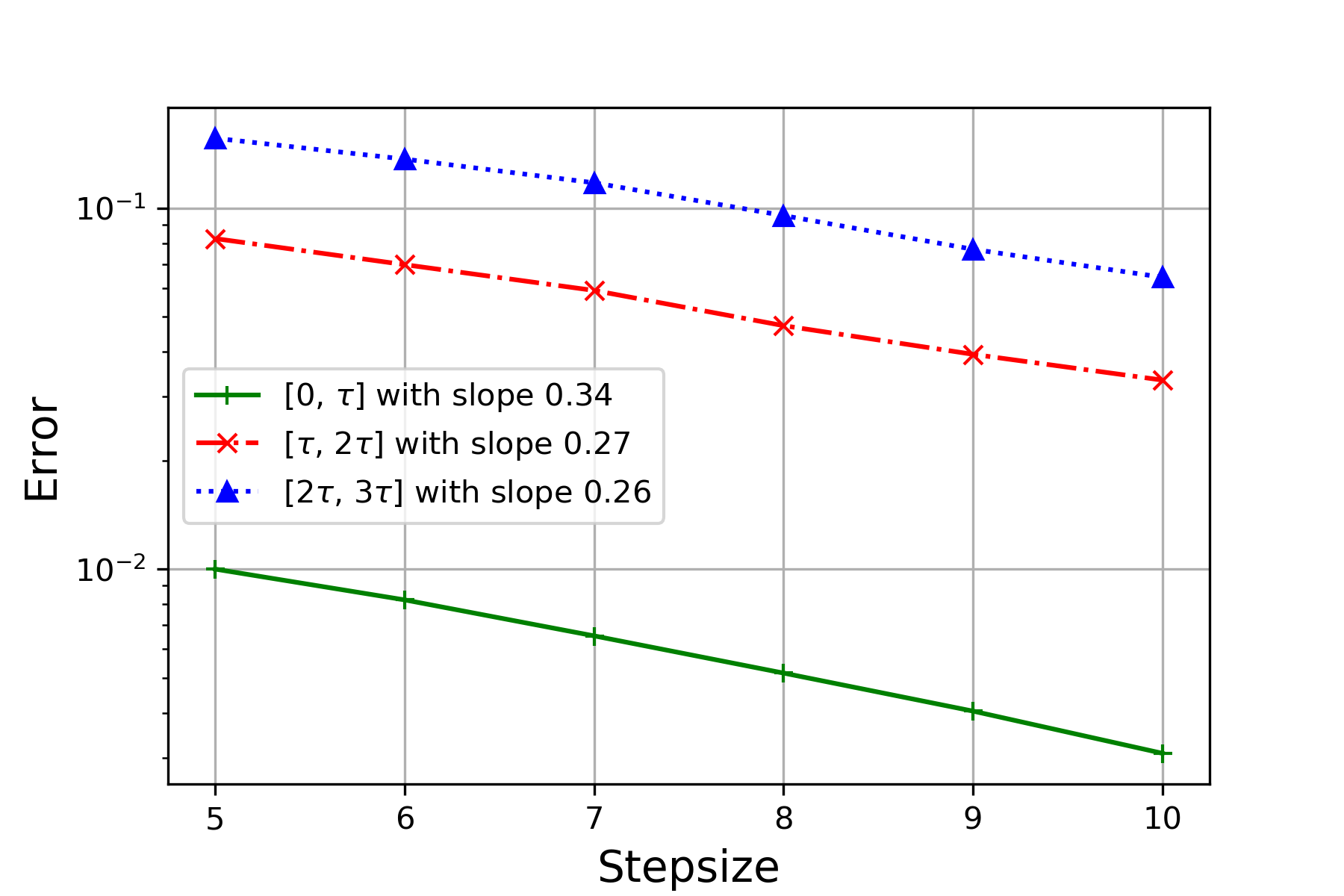}
    \caption{$\gamma_2=0.5$  and  $\alpha=1$. \label{fig:2alpha11,gamma50.5,f2}}  
\end{subfigure}
\\
\begin{subfigure}{0.31\textwidth}
    \centering
    \includegraphics[width=\textwidth]{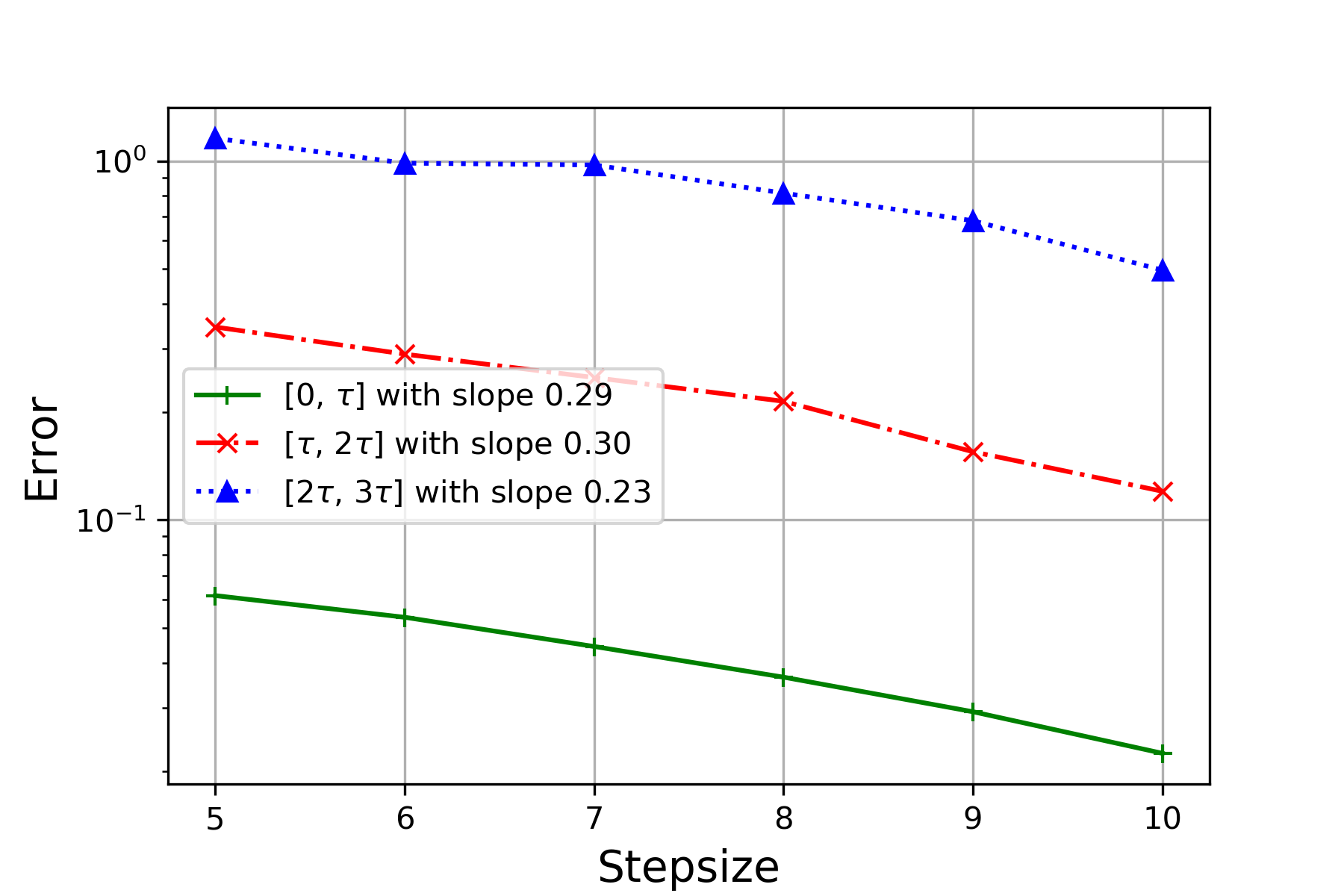}
    \caption{$\gamma_2=1$ and  $\alpha=0.1$. \label{fig:2alpha0.10.1,gamma50.1,f2}}
\end{subfigure} \begin{subfigure}{0.31\textwidth}
    \centering
    \includegraphics[width=\textwidth]{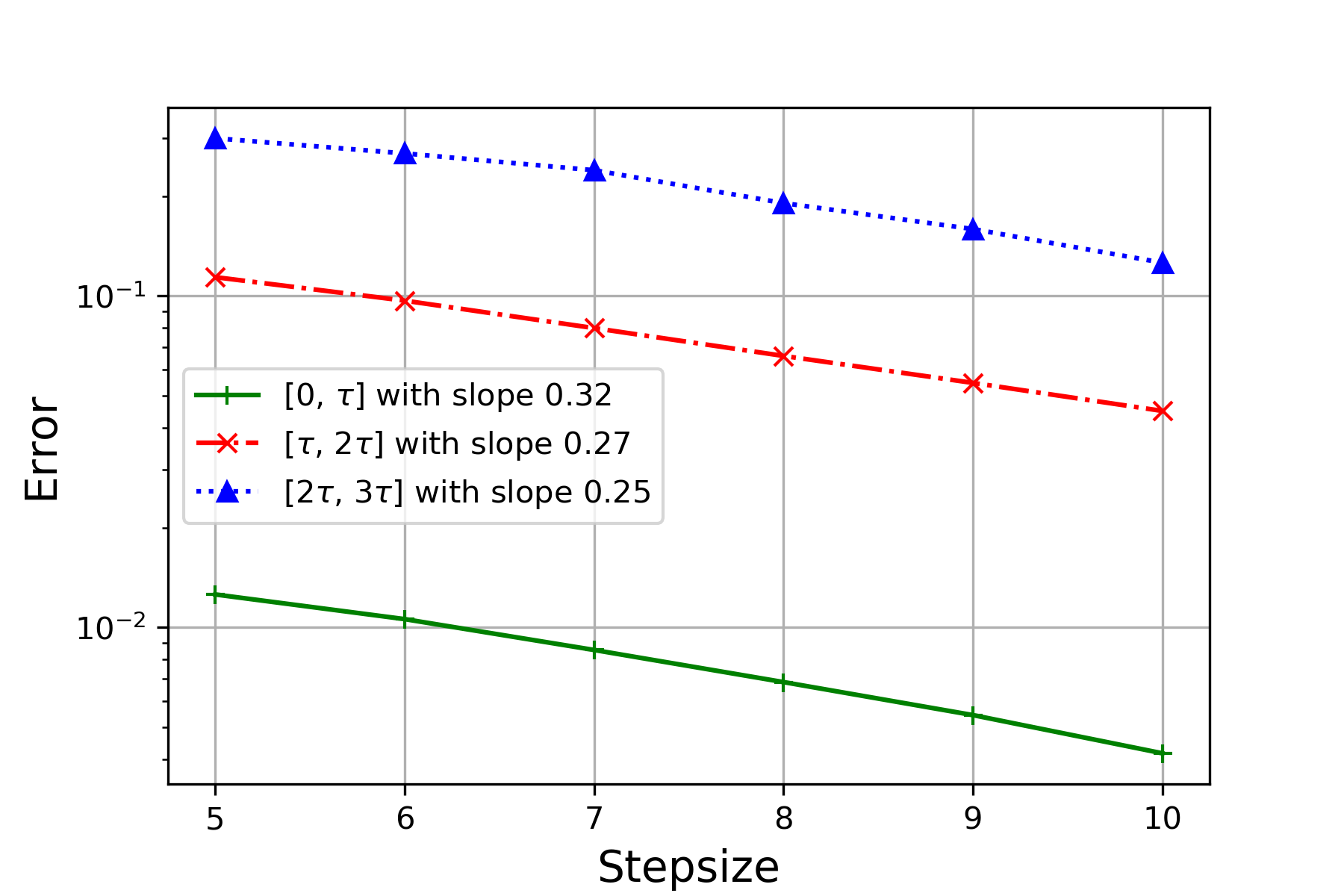}
    \caption{$\gamma_2=1$ and $\alpha=0.5$. \label{fig:2alpha0.50.5,gamma50.1,f2}}  
\end{subfigure}
\begin{subfigure}{0.31\textwidth}
    \centering
    \includegraphics[width=\textwidth]{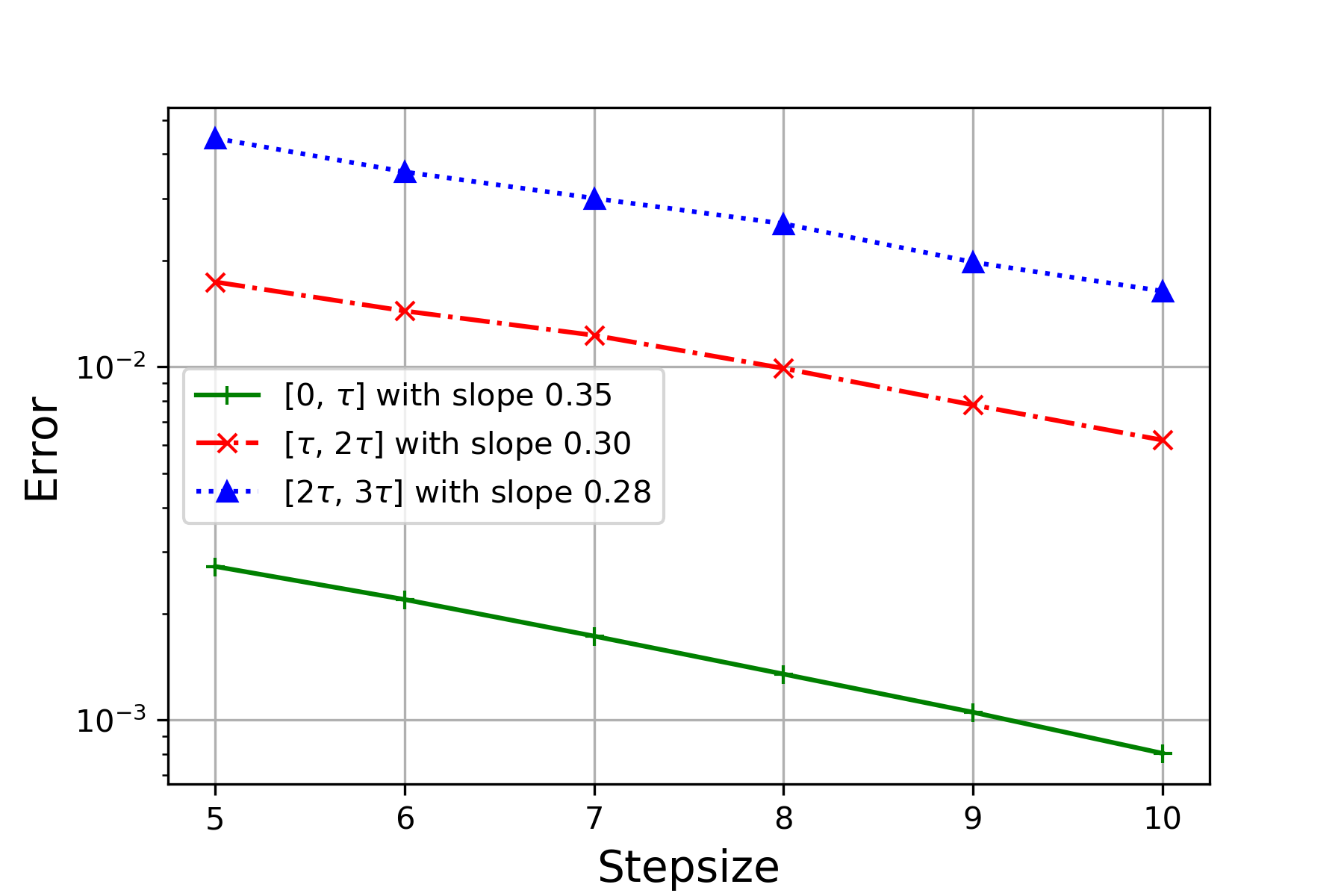}
    \caption{$\gamma_2=1$ and $\alpha=1$. \label{fig:2alpha11,gamma51,f2}}  
\end{subfigure}
\\
\caption{Mean square errors slope for \eqref{eq:f1} with \eqref{eqn:k2} plus \eqref{eq:f2} at $\gamma_2=0.1,0.5,1$ and values of $\alpha_1=\alpha_2=\alpha=0.1, 0.5, 1$. \label{fig:2mse_g2f2_gamma51}}
\end{figure}
One can compare the results horizontally or vertically in Figure \ref{fig:2mse_g2f2_gamma51}. For each fixed $(\alpha_1,\alpha_2)$ pair and for each fixed $\gamma_2$, the negative slope in general decreases with $j$. Horizontally, each fixed $\gamma_2$ and for each fixed $[j\tau, (j+1)\tau]$ interval, the error decreases significantly with increasing $\alpha_1$. Vertically, for each fixed $(\alpha_1,\alpha_2)$ pair and for each fixed $[j\tau, (j+1)\tau]$ interval, the numerical error decreases with $\gamma$. All these observations coincide with Theorem \ref{rate_of_conv_expl_Eul}.
\end{exm}

\section{Appendix}

\begin{lem}
\label{meas_Xgamma}
    Let $Y=(Y(t))_{t\geq 0}$ is $(\Sigma_t)_{t\geq 0}$-progressively measurable stochastic process and let $\xi:\Omega\to [a,b]$, $-\infty<a<b<+\infty$, is a random variable on $(\Omega,\Sigma,\mathbb{P})$. Then $Y(\xi)$ is $\sigma(\xi)\vee \Sigma_{b}$-measurable.
\end{lem}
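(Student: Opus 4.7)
The plan is to realize $Y(\xi)$ as a composition of two measurable maps and then appeal to the fact that compositions of measurable maps are measurable. Concretely, define $\Phi:\Omega \to [0,b]\times\Omega$ by $\Phi(\omega):=(\xi(\omega),\omega)$. Then $Y(\xi)(\omega) = Y(\xi(\omega),\omega)=(Y\circ\Phi)(\omega)$, so it suffices to verify the measurability of each factor with respect to the appropriate $\sigma$-fields.

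For the first factor, I would invoke directly the definition of progressive measurability: since $Y=(Y(t))_{t\geq 0}$ is $(\Sigma_t)_{t\geq 0}$-progressively measurable, its restriction $Y:[0,b]\times\Omega\to\mathbb{R}^d$ is $\mathcal{B}([0,b])\otimes\Sigma_b$-measurable. (One may tacitly take $a\geq 0$, or replace $\xi$ by $\xi\vee 0$, so that $Y(\xi)$ is well-defined; this does not change $\sigma(\xi)$.) For the second factor, I would show that $\Phi$ is measurable as a map from $(\Omega,\sigma(\xi)\vee\Sigma_b)$ to $([0,b]\times\Omega,\mathcal{B}([0,b])\otimes\Sigma_b)$. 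Since the product $\sigma$-field is generated by measurable rectangles $B\times A$ with $B\in\mathcal{B}([0,b])$ and $A\in\Sigma_b$, it suffices to check that
\[
\Phi^{-1}(B\times A)=\{\omega\in\Omega:\xi(\omega)\in B\}\cap A
\]
belongs to $\sigma(\xi)\vee\Sigma_b$, which is immediate because $\{\xi\in B\}\in\sigma(\xi)$ and $A\in\Sigma_b$.

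Putting the two measurability statements together through the composition $Y\circ\Phi$ yields that $Y(\xi)$ is $\sigma(\xi)\vee\Sigma_b$-measurable, as claimed. There is no genuine obstacle here; the only care needed is to invoke progressive measurability rather than mere adaptedness (plain adaptedness would not make the joint map $(t,\omega)\mapsto Y(t,\omega)$ product-measurable), and to keep the truncation of $\xi$ to $[0,b]$ in mind so that the evaluation $Y(\xi)$ is defined and the $\sigma$-field $\Sigma_b$ is the correct one. The argument is a standard measure-theoretic exercise and writes up in a few lines.
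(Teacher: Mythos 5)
Your proposal is correct and follows essentially the same route as the paper's own proof: both realize $Y(\xi)$ as the composition $Y\circ H$ with $H(\omega)=(\xi(\omega),\omega)$, verify measurability of $H$ on the generating rectangles $B\times F$, and invoke progressive measurability for the joint map $(t,\omega)\mapsto Y(t,\omega)$. The only cosmetic difference is your use of $[0,b]$ in place of the paper's $[a,b]$, which you correctly flag as harmless.
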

\begin{proof}
Since $Y$ is progressively measurable we have that the mapping $[a,b]\times\Omega\ni (t,\omega)\to Y(t,\omega)\in\mathbb{R}^d$ is $\mathcal{B}([a,b])\otimes\Sigma_b$-to-$\mathcal{B}(\mathbb{R}^d)$ measurable. Let us define 
\begin{equation}
    \Omega\ni\omega\to H(\omega)=(\xi(\omega),\omega),
\end{equation}
and $\mathcal{F}_0=\{B\times F \ | \ B\in\mathcal{B}([a,b]), \ F\in\Sigma_b\}$. Note that $\mathcal{B}([a,b])\otimes\Sigma_b=\sigma(\mathcal{F}_0)$. Since for any $B\times F\in\mathcal{F}_0$ we have that $H^{-1}(B\times F)=\xi^{-1}(B)\cap F\in \sigma(\xi)\vee\Sigma_b$, we get by Proposition 2.3., page 6 in \cite{EC} that the function $H$ is $\sigma(\xi)\vee\Sigma_b$-to-$\mathcal{B}([a,b])\otimes\Sigma_b$ measurable. Since 
\begin{equation}
    \Omega\ni\omega\to Y(\xi(\omega),\omega)=(Y\circ H)(\omega)\in\mathbb{R}^d,
\end{equation}
we get for any $B\in\mathcal{B}(\mathbb{R}^d)$ that $(Y\circ H)^{-1}(B)=H^{-1}(Y^{-1}(B))\in\sigma(\xi)\vee\Sigma_b$. This implies the thesis.
\end{proof}
\bibliographystyle{plain}
\bibliography{biblioDDDE.bib}

\begin{thebibliography}{10}

\bibitem{BGMP2021}
T.~Bochacik, M.~Go\'cwin, P.~M. Morkisz, and P.~Przyby{\l}owicz.
\newblock Randomized {R}unge-{K}utta method--{S}tability and convergence under
  inexact information.
\newblock {\em J. Complex.}, 65:101554, 2021.

\bibitem{bochacik2}
T.~Bochacik and P.~Przyby\l owicz.
\newblock On the randomized {E}uler schemes for {ODEs} under inexact
  information.
\newblock {\em Numerical Algorithms}, 91:1205--1229, 2022.

\bibitem{cao2015numerical}
W.~Cao, Z.~Zhang, and G.~E.M. Karniadakis.
\newblock Numerical methods for stochastic delay differential equations via the
  {W}ong--{Z}akai approximation.
\newblock {\em SIAM J. Sci. Comput.}, 37(1):A295--A318, 2015.

\bibitem{EC}
E.~Cinlar.
\newblock {\em Probability and Stochastics}.
\newblock Springer Science \& Business Media, 2011.

\bibitem{SHTD}
T.~Daun and S.~Heinrich.
\newblock Complexity of {B}anach space valued and parametric stochastic {I}t\^o
  integration.
\newblock {\em J. Complexity}, 40:100--122, 2017.

\bibitem{difonzo2022existence}
F.~V. Difonzo, P.~Przyby{\l}owicz, and Y.~Wu.
\newblock Existence, uniqueness and approximation of solutions to
  {C}arath\'eodory delay differential equations.
\newblock {\em arXiv preprint arXiv:2204.02016}, 2022.

\bibitem{SH1}
S.~Heinrich.
\newblock Complexity of stochastic integration in {S}obolev classes.
\newblock {\em J. Math. Anal. Appl.}, 476:177--195, 2019.

\bibitem{hein_milla1}
S.~Heinrich and B.~Milla.
\newblock The randomized complexity of initial value problems.
\newblock {\em J. Complex.}, 24:77--88, 2008.

\bibitem{JentzenNeuenkirch}
A.~Jentzen and A.~Neuenkirch.
\newblock {A random Euler scheme for Carath{\'e}odory differential equations}.
\newblock {\em Journal of Computational and Applied Mathematics}, 224:346--359,
  2009.

\bibitem{TWW88}
H.~Wo\'zniakowski J.F.~Traub, G.W.~Wasilkowski.
\newblock {\em Information-Based Complexity}.
\newblock Academic Press, New York, 1988.

\bibitem{RKYW2017}
R.~Kruse and Y.~Wu.
\newblock Error analysis of randomized {R}unge-{K}utta methods for differential
  equations with time-irregular coefficients.
\newblock {\em Comput. Methods Appl. Math.}, 17:479--498, 2017.

\bibitem{RKYW2019}
R.~Kruse and Y.~Wu.
\newblock A randomized {M}ilstein method for stochastic differential equations
  with non-differentiable drift coefficients.
\newblock {\em Discrete and Continuous Dynamical Systems - B},
  24(8):3475--3502, 2019.

\bibitem{XMao}
X.~Mao.
\newblock {\em Stochastic Differential Equations and Applications, 2nd. ed.}
\newblock Woodhead Publishing, 2007.

\bibitem{mao2000delay}
X.~Mao and L.~Shaikhet.
\newblock Delay-dependent stability criteria for stochastic differential delay
  equations with {M}arkovian switching.
\newblock {\em Stability and Control: Theory and Applications}, 3(2):88--102,
  2000.

\bibitem{PMPP2017}
P.~M. Morkisz and P.~Przyby{\l}owicz.
\newblock Optimal pointwise approximation of {SDE}'s from inexact information.
\newblock {\em J. Comp. Appl. Math.}, 324:85--100, 2017.

\bibitem{Morkisz2020}
P.~M. Morkisz and P.~Przyby{\l}owicz.
\newblock Randomized derivative-free {M}ilstein algorithm for efficient
  approximation of solutions of {SDE}s under noisy information.
\newblock {\em J. Comp. Appl. Math.}, 383, 2021.
\newblock 113112.

\bibitem{PARRAS}
E.~Pardoux and A.~Rascanu.
\newblock {\em Stochastic Differential Equations, Backward SDEs, Partial
  Differential Equations}.
\newblock Stochastic Modelling and Applied Probability. Springer International
  Publishing Switzerland, 2014.

\bibitem{PPPM2014}
P.~Przyby{\l}owicz and P.~M. Morkisz.
\newblock Strong approximation of solutions of stochastic differential
  equations with time-irregular coefficients via randomized {E}uler algorithm.
\newblock {\em Appl. Numer. Math.}, 2014.

\bibitem{Pss22}
P.~Przyby{\l}owicz, V.~Schwarz, and M.~Sz{\"o}lgyenyi.
\newblock Randomized {M}ilstein algorithm for approximation of solutions of
  jump-diffusion {SDE}s.
\newblock {\em arXiv:2212.00411}, 2022.

\bibitem{PSS2022}
P.~Przyby{\l}owicz, M.~Sobieraj, and {\L}.~St\c{e}pie\'{n}.
\newblock Efficient approximation of {SDE}s driven by countably dimensional
  {W}iener process and {P}oisson random measure.
\newblock {\em SIAM Journal on Numerical Analysis}, 60(2):824--855, 2022.

\bibitem{Situ}
R.~Situ.
\newblock {\em {T}heory of {S}tochastic {D}ifferential {E}quations with {J}umps
  and {A}pplications: {M}athematical and {A}nalytical {T}echniques with
  {A}pplications to {E}ngineering}.
\newblock Springer, Boston, 2005.

\end{thebibliography}
\end{document}